\def\blfootnote{\xdef\@thefnmark{}\@footnotetext}
\newlength\Textht
\newtheorem{thm}{Theorem}[section]
\newtheorem{cor}[thm]{Corollary}
\newtheorem{lemma}[thm]{Lemma}
\newtheorem{prop}[thm]{Proposition}
\newtheorem{obs}[thm]{Observation}
\newtheorem{que}[thm]{Question}
\newtheorem{conj}[thm]{Conjecture}
\newtheorem{claim}[thm]{Claim}
\theoremstyle{definition}
\newtheorem{dfn}[thm]{Definition}
\newtheorem{rem}[thm]{Remark}
\newtheorem{ex}[thm]{Example}
\newcommand{\dc}{\operatorname{dc}}
\newcommand{\rk}{\operatorname{r}}
\newcommand{\rrk}{\operatorname{\tilde{r}}}
\newcommand{\trk}{\operatorname{\tilde{r}}}
\newcommand{\di}{\operatorname{di}}
\newcommand{\alge}{\mathcal{AE}}
\newcommand{\Z}{{\mathbb Z}}
\newcommand\ZZ{\mathbb{Z}}
\newcommand{\N}{{\mathbb N}}
\newcommand{\GL}{\operatorname{GL}}
\newcommand{\diag}{\operatorname{diag}}
\newcommand{\aut}{\operatorname{Aut}}
\newcommand{\fix}{\operatorname{Fix}}
\newcommand{\row}{\operatorname{row}}
\newcommand{\bp}{\odot}
\newcommand{\core}{\operatorname{core}}
\begin{document}

\title{Degrees of compression and inertia for free-abelian times free groups}

\author{Mallika Roy}
\address{Departament de Matem\`atiques, Universitat Polit\`ecnica de Catalunya, Catalonia}
\email{mallika.roy@upc.edu}

\author{Enric Ventura}
\address{Departament de Matem\`atiques, Universitat Polit\`ecnica de Catalunya, Catalonia} \email{enric.ventura@upc.edu}

\subjclass[2010]{20E05, 20E07, 20K25}

\keywords{free groups, subgroups, degree of compression, degree of inertia}

\date{\today}

\begin{abstract}
We introduce the concepts of degree of inertia, $\di_G(H)$, and degree of compression, $\dc_G(H)$, of a finitely generated subgroup $H$ of a given group $G$. For the case of direct products of free-abelian and free groups, we compute the degree of compression and give an upper bound for the degree of inertia. Imposing some technical assumptions to the supremum involved in the definition of degree of inertia, we introduce the notion called restricted degree of inertia, $\di'_G(H)$, and, again for the case $\Z^m \times F_n$, we provide an explicit formula relating it to the restricted degree of inertia of its projection to the free part, $\di'_{F_n}(H\pi)$.
\end{abstract}

\maketitle

%\tableofcontents

\section{Introduction}

For a group $G$, we write $\rk(G)$ to denote the rank of $G$, i.e., the minimum cardinal of a generating set for $G$. To work with group morphisms, we use the notational convention of writing arguments on the left, i.e., $\phi\colon G_1\to G_2$, $g\mapsto g\phi$; and so, compositions as written: $g\phi\psi =(g\phi)\psi$. Accordingly, we write conjugations on the right, $H^g =g^{-1}Hg$, and commutators in the form $[a,b]= a^{-1}b^{-1}ab$. For a subgroup $H\leqslant G$, we shall use the notation $H\leqslant_{fg} G$ to emphasize that $H$ is finitely generated, and $H\leqslant_{fi} G$ (resp., $H\leqslant_{\infty} G$, or $H\leqslant_l G$) to emphasize it is of finite index (resp., infinite index, or index $l$) in $G$.

In the commutative realm, the rank function is increasing in the sense that $H\leqslant K\leqslant G$ implies $\rk(H)\leqslant \rk(K)$. This is far from true in general, and the main expression of this phenomena can be found in the context of free groups $F_n$, where the free group of countably infinite rank easily embeds into the free group of rank 2, $F_{\aleph_0}\leqslant F_2$. However, when restricting ourselves to certain families of groups and subgroups, the rank function tends to behave less wildly and somehow closer to the commutative behaviour. An example of this situation is again in finitely generated free groups, but restricting our attention to subgroups fixed by automorphisms or endomorphisms: the story began in~\cite{DS}, where Dyer--Scott showed that $\fix(\varphi)$ is a free factor of $F_n$ for every finite order automorphism $\varphi \in \aut(F_n)$, and conjectured that $\rk(\fix(\varphi))\leqslant n$, in general. This was proved later by Bestvina--Handel~\cite{BH}, and extended several times in subsequent papers, all of them pointing to the direction that the rank function, when restricted to subgroups fixed by endomorphisms, tends to behave similarly to the abelian case. In this spirit, the following concepts were first introduced by Dicks--Ventura in~\cite{DV} and turned out to be quite relevant in the subsequent literature:

\begin{dfn}
Let $G$ be a group. A finitely generated subgroup $H{\leqslant}_{fg}G$ is said to be \emph{compressed in $G$} if $\rk(H)\leqslant \rk(K)$, for every $H\leqslant K\leqslant G$. And $H$ is said to be \emph{inert in $G$} if $\rk(H\cap K)\leqslant \rk(K)$, for every $K\leqslant G$. (Note that, equivalently, in both definitions one can restrict the attention to those subgroups $K$'s being finitely generated.)
\end{dfn}

Observe that, directly by induction from the definition, inert subgroups are closed under finite intersections. Also, inert subgroups are compressed, while the other implication is not known to be true or false in free groups (this is the so-called inertia conjecture, see Zhang--Ventura--Wu~\cite{ZVW} for partial results), and it is not true in general:

\begin{ex}
Consider the direct product of the Klein bottle group with the group of integers, say $G=\langle a,b \mid bab^{-1}a\rangle \times \langle c \mid \, \rangle$, and its subgroup $H=\langle a, b^2, c\rangle \simeq \ZZ^3$. By Corollary~4.3 and Proposition 4.4 from~\cite{ZVW}, $H$ is compressed but not inert in $G$.
\end{ex}

Several known results involving these concepts include the following:

\begin{thm}
\begin{itemize}
\item[(i)] (Dicks--Ventura, \cite{DV}): Arbitrary intersections of fixed subgroups of injective endomorphisms of $F_n$ are inert in $F_n$;
\item[(ii)] (Martino--Ventura, \cite{MV}): arbitrary intersections of fixed subgroups of endomorphisms of $F_n$ are compressed in $F_n$;
\item[(iii)] (Wu--Zhang, \cite{WZ}): arbitrary intersections of fixed subgroups of automorphisms of closed surface groups $G$ with negative Euler characteristic are inert in $G$;
\item[(iv)] (Wu--Ventura--Zhang, \cite{WVZ}): arbitrary intersections of fixed subgroups of endomorphisms of surface groups $G$ are compressed in $G$.
\end{itemize}
\end{thm}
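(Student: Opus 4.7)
The plan is to handle each of the four items separately, since they originate from four distinct papers and rely on genuinely different machinery; the unifying principle, however, is that fixed subgroups of (endo)morphisms are far more rigid than generic subgroups, a rigidity ultimately rooted in the Bestvina--Handel bound $\rk(\fix(\varphi))\leqslant n$ for $\varphi\in\aut(F_n)$ and its subsequent extensions.

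For (i), following the Dicks--Ventura strategy, I would work combinatorially with Stallings folded graphs. Given injective $\varphi\in\End(F_n)$ and $K\leqslant_{fg} F_n$, realise $\fix(\varphi)\cap K$ as the core of the pull-back of the folded graphs of $\fix(\varphi)$ and $K$; the key combinatorial step is to bound the rank of this core using the very special graph structure of $\fix(\varphi)$, exploiting injectivity so that $\varphi$ acts rigidly on edges under iteration, and concluding $\rk(\fix(\varphi)\cap K)\leqslant \rk(K)$. To extend to arbitrary intersections, I would first use that the family of inert subgroups is closed under finite intersections (a direct induction from the definition, as already observed in the text just after the main definition), and then reduce the infinite case to the finite case via the fact that a descending chain of finitely generated subgroups of $F_n$ whose intersection is finitely generated must stabilise.

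For (ii), once injectivity is dropped, I would factor an arbitrary $\varphi\in\End(F_n)$ as $\varphi=\iota\circ\pi$ with $\pi$ surjective and $\iota$ injective, and reduce the compression bound to the injective case together with the Bestvina--Handel rank estimate applied to $\fix(\iota)$. Only compression (not inertia) survives, because the collapsing caused by $\pi$ can enlarge intersections uncontrollably; this is exactly where the two statements (i) and (ii) part company. For parts (iii) and (iv), I would transfer the free-group strategy to (closed) surface groups $G$ via their hyperbolic geometric model: combine analogues of Stallings graphs adapted to surface groups (in the spirit of Gitik and others) with rigidity of fixed subgroups of surface automorphisms à la Nielsen--Thurston, and then mimic the injective-case argument of (i) for automorphisms to get (iii), and the factorisation argument of (ii) for endomorphisms to get (iv).

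The main obstacle throughout, and where most of the technical work lives in each of the four original papers, is obtaining the correct rank bound on the intersection $\fix(\varphi)\cap K$: generic intersections of subgroups of $F_n$ (or of a surface group) can blow up in rank, by Hanna-Neumann type phenomena, and the whole content of the theorem is precisely that the structural specialness of fixed subgroups prevents this blow-up. Concretely, the hardest sub-step I anticipate in each case is producing a combinatorial or geometric model of $\fix(\varphi)$ rigid enough to control the pull-back rank; once that model is in place, the extension from a single endomorphism to arbitrary intersections is formal, via the closure-under-intersection property of inert (resp.\ compressed) subgroups.
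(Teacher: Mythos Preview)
The paper does not prove this theorem at all: it is stated purely as a summary of results established in the cited references \cite{DV}, \cite{MV}, \cite{WZ}, \cite{WVZ}, with no argument supplied here. So there is nothing to compare your sketch against in the present paper; the proofs live entirely in those four sources, each of which develops substantial machinery (groups acting on trees and a detailed fixed-edge analysis in \cite{DV}, a reduction of general endomorphisms to monomorphisms in \cite{MV}, Nielsen--Thurston theory for surfaces in \cite{WZ}, \cite{WVZ}).

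That said, your outline contains two genuine gaps that would prevent it from being completed as written. First, your passage from finite to arbitrary intersections in~(i) relies on the claim that ``a descending chain of finitely generated subgroups of $F_n$ whose intersection is finitely generated must stabilise''; this is false already for the chain $\langle a^{2^k}\rangle$ in $F_1$, which strictly descends with trivial (hence finitely generated) intersection. The Dicks--Ventura argument does not proceed by such a reduction: it treats an arbitrary family of injective endomorphisms directly. Second, and more seriously, your final paragraph invokes ``the closure-under-intersection property of inert (resp.\ compressed) subgroups'' to handle~(ii) and~(iv), but while inertia is closed under finite intersections (as noted just after Definition~1.1), compression is not known to be, and there is no reason to expect it: if $H_1,H_2$ are compressed and $H_1\cap H_2\leqslant H_1$ has rank exceeding $\rk(H_1)$, then $H_1\cap H_2$ is visibly not compressed. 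The Martino--Ventura and Wu--Ventura--Zhang proofs of~(ii) and~(iv) therefore cannot, and do not, run through such a closure property; they argue directly with the family of endomorphisms. In short, what you have is a plausible-sounding plan whose connective tissue (the two reduction steps just named) does not hold, and the honest move in the context of this paper is simply to cite the four references.
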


Also, in~\cite{WVZ} and~\cite{ZVW}, Zhang--Ventura--Wu studied similar questions within the family of finite direct products of free and surface groups, where more interesting phenomena show up.

In the present paper we introduce a quantification for these two concepts and study it within the families of free groups, and free-abelian times free groups. For technical reasons it is better to work with the so-called \emph{reduced rank} of a group $G$, defined as $\rrk(G)=\max\{ 0, \rk(G)-1\}$, i.e., one unit less than the rank except for the trivial group for which we take zero (note that then, $\rrk(1)=\rrk(\Z)=0$ while $0=\rk(1)\neq \rk(\Z)=1$). Observe that $H\leqslant G$ is compressed in $G$ if and only if $\rrk(H)/\rrk(K)\leqslant 1$ for every $H\leqslant K\leqslant_{fg} G$; and that $H\leqslant G$ is inert in $G$ if and only if $\rrk(H\cap K)/\rrk(K)\leqslant 1$ for every $K\leqslant_{fg} G$ (understanding always $0/0=1$). This motivates the following quantitative definitions:

\begin{dfn}
Let $G$ be a group and $H\leqslant_{fg}G$. The \emph{degree of compression of $H$ in $G$} is $\dc_G(H)=\sup_{K} \{ \rrk(H)/\rrk(K)\}$, where the supremum is taken over all subgroups $H\leqslant K{\leqslant}_{fg} G$. Similarly, the \emph{degree of inertia of $H$ in $G$} is $\di_G(H)=\sup_{K} \{ \rrk(H\cap K)/\rrk(K)\}$, where the supremum is taken over all $K{\leqslant}_{fg} G$ satisfying $H\cap K\leqslant_{fg} G$ and, in both cases, $0/0$ is understood to be 1.
\end{dfn}

Note that, taking $K=H$, we get $\dc_G(H)\geqslant 1$ and $\di_G(H)\geqslant 1$. So, the possibility of $K$ being cyclic (which leads in both cases to $0/0=1$) is irrelevant in both definitions and we can restrict the two supremums to non-cyclic $K$'s without changing their final values.

Note also that the supremum in the definition of degree of compression is always a maximum, since the numerator has a fixed value and the denominator takes only natural values. Although we do not have any particular example, the supremum in the definition of degree of inertia could, in principle, not be attained at any particular subgroup $K$. In this sense, the following is an intriguing question for which, at the time of writing, we have no idea how to answer:

\begin{que}
Is there a (finitely generated) group $G$ and a subgroup $H\leqslant_{fg} G$ such that $\di_G(H)$ is a transcendental number? or an irrational number? Or such that the supremum in $\di_G(H)$ is not a maximum?
\end{que}

Observe that in the definition of degree of inertia, we take the supremum only over those subgroups $K\leqslant_{fg} G$ \emph{whose intersection with $H$ is again finitely generated}. In groups $G$ with the Howson property (the intersection of any two finitely generated subgroups is again finitely generated), like free groups or surface groups, this is no restriction at all and that supremum is over all finitely generated $K$'s. Otherwise, if $G$ is not Howson we are eliminating, on purpose, those possible finitely generated $K$'s having non-finitely generated intersection with $H$ (which would force $\di_G(H)$ to be automatically infinite). However observe that, even with the present limited definition, $\di_G(H)$ may be infinite as well; explicit examples will be shown later.

We adapt the definition of inertia to the non-Howson environments by saying that a subgroup $H\leqslant G$ is \emph{finitary inert in $G$} if $\rk(H\cap K)\leqslant \rk(K)$ for every $K\leqslant_{fg} G$ such that $H\cap K\leqslant_{fg} G$. The following observation then follows directly from the definitions and presents the values of $\dc_G(H)$ and $\di_G(H)$ as a quantification of how far is the subgroup $H\leqslant_{fg} G$ from being compressed and from being finitary inert in $G$, respectively:

\begin{obs}\label{elementary}
Let $G$ be a group and $H\leqslant_{fg}G$. Then,
\begin{itemize}
\item[(i)] $1\leqslant \dc_G(H)\leqslant \di_G(H)$;
\item[(ii)] $\dc_G(H)=1$ if and only if $H$ is compressed in $G$;
\item[(iii)] $\di_G(H)=1$ if and only if $H$ is finitary inert in $G$.
\end{itemize}
\end{obs}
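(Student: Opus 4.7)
The plan is to derive all three items directly from the definitions, the only subtlety being the bookkeeping around the reduced rank $\rrk(G) = \max\{0, \rk(G) - 1\}$, the convention $0/0 = 1$, and the remark (made just before the statement) that cyclic test subgroups $K$ may be excluded from both suprema without affecting their values.

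For item~(i), I plug the admissible choice $K = H$ into the supremum defining $\dc_G(H)$: the resulting ratio $\rrk(H)/\rrk(H)$ equals $1$ either by ordinary arithmetic or by the convention $0/0 = 1$, giving $\dc_G(H) \geqslant 1$. For $\dc_G(H) \leqslant \di_G(H)$, I observe that any $K$ with $H \leqslant K \leqslant_{fg} G$ automatically satisfies $H \cap K = H \leqslant_{fg} G$, hence is admissible in the supremum defining $\di_G(H)$, and contributes exactly the ratio $\rrk(H \cap K)/\rrk(K) = \rrk(H)/\rrk(K)$; thus $\di_G(H)$ is taken over a (possibly) larger collection of ratios than $\dc_G(H)$, and the inequality follows.

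Items~(ii) and~(iii) are obtained by a tautological translation between $\rk$ and $\rrk$. Once cyclic $K$'s are excluded, $\rrk(K) = \rk(K) - 1 \geqslant 1$, so the scalar inequality $\rrk(H)/\rrk(K) \leqslant 1$ is equivalent to $\rk(H) \leqslant \rk(K)$; cyclic $K$'s are handled by the fact that a subgroup of a cyclic group is cyclic, and the convention $0/0 = 1$ absorbs the resulting trivial cases. Thus $\dc_G(H) \leqslant 1$ is precisely a rewriting of the defining condition for $H$ to be compressed in $G$; combined with~(i) this yields the equivalence in~(ii), and the same argument with $H \cap K$ in place of $H$, together with the admissibility condition $H \cap K \leqslant_{fg} G$ built into the definition of $\di_G(H)$, yields~(iii).

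I do not expect any substantial obstacle: the conventions $\rrk = \max\{0, \rk - 1\}$, $0/0 = 1$, and the irrelevance of cyclic test subgroups are calibrated exactly so that the passage between the group-theoretic rank inequalities and the scalar quotient inequalities becomes tautological. The only point to watch is that each edge case (trivial $H$, cyclic $K$, or both at once) behaves coherently on each side of the translation, which is a straightforward case check rather than a genuine difficulty.
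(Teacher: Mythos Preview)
Your proposal is correct and matches the paper's intent: the paper states this observation without proof, simply remarking that it ``follows directly from the definitions,'' and the paragraph immediately preceding it already records the equivalence between the rank inequalities defining compressed/inert and the corresponding reduced-rank quotient inequalities. Your write-up is just a careful unpacking of that remark, handling the edge cases (cyclic $K$, trivial $H$, the $0/0$ convention) exactly as the surrounding text sets them up.
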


The following intriguing question is open, as far as we know:

\begin{que}
Is there a (finitely generated) group $G$ with a subgroup $H\leqslant_{fg} G$ being finitary inert but not inert? (i.e., satisfying $\rrk(H\cap K)\leqslant \rrk(K)$ for every $K\leqslant_{fg}G$ with $H\cap K\leqslant_{fg} G$, but simultaneously admitting some $K_0\leqslant_{fg} G$ with $\rrk(H\cap K_0)=\infty$?).
\end{que}

In the present paper we study these notions for the case of the free group and obtain the following result in Section~\ref{free}:

\begin{thm}\label{main1}
For any finitely generated free group $G=F_n$, the function $\dc_{F_n}(\cdot)$ is computable; more precisely, there is an algorithm which, on input $h_1,\,\ldots ,h_r\in F_n$, it computes the value of $\dc_{F_n} (\langle h_1, \ldots ,h_r\rangle)$ and outputs a free basis of a subgroup $K\leqslant_{fg} F_n$ where it is attained.
\end{thm}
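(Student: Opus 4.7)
The plan is to reduce the supremum in $\dc_{F_n}(H)$ to a minimum over a finite, algorithmically computable family. Since the numerator $\rrk(H)$ is fixed and $\rrk(K)$ takes values in $\ZZ_{\geqslant 1}$ (restricting, as the paper allows, to non-cyclic $K$), the supremum is always attained, and
\[
\dc_{F_n}(H)\ =\ \frac{\rrk(H)}{\min\bigl\{\rrk(K)\ :\ H\leqslant K\leqslant_{fg}F_n,\ K\text{ non-cyclic}\bigr\}}.
\]
So the entire content of the theorem is the effective computability of this minimum, together with a witness subgroup.

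For this I would invoke the Takahasi theorem in its modern Stallings-graph formulation (as developed by Ventura, Margolis--Sapir--Weil, Kapovich--Miasnikov, and others): there is a finite, effectively computable set $\alge(H)$ of \emph{algebraic extensions} of $H$ in $F_n$, with the crucial property that every finitely generated overgroup $H\leqslant K\leqslant_{fg}F_n$ contains some $E\in \alge(H)$ as a free factor. Since free factors of $K$ satisfy $\rrk(E)\leqslant \rrk(K)$, it follows that
\[
\min\bigl\{\rrk(K)\ :\ H\leqslant K\leqslant_{fg}F_n\bigr\}\ =\ \min\bigl\{\rrk(E)\ :\ E\in\alge(H)\bigr\},
\]
and the maximiser realising $\dc_{F_n}(H)$ may be chosen from the finite list $\alge(H)$ itself.

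The concrete algorithm is then routine. From the input words $h_1,\ldots,h_r$, build the Stallings graph $\Gamma(H)$ and compute $\rrk(H)=|E(\Gamma(H))|-|V(\Gamma(H))|+1$. Enumerate all equivalence relations on the vertex set of $\Gamma(H)$; for each, form the quotient graph, fold it, and record the resulting Stallings graph $\Gamma_j$ of a finitely generated overgroup $K_j$ of $H$. The finite family $\{K_j\}$ contains $\alge(H)$ by construction, so it suffices to compute $\rrk(K_j)$ for each (again by an Euler-characteristic count on $\Gamma_j$), discard cyclic candidates, select one $\Gamma_{j_0}$ minimising the reduced rank, and output $\rrk(H)/\rrk(K_{j_0})$ together with a free basis of $K_{j_0}$ read off from a spanning tree of $\Gamma_{j_0}$.

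The argument is mostly a packaging exercise on top of the Stallings-graph toolkit: the only step requiring care is the verification that the supremum defining $\dc$ genuinely reduces to a minimum over $\alge(H)$, which follows at once from the free-factor property of algebraic extensions together with monotonicity of $\rrk$ under taking free factors. I do not anticipate a serious obstacle; the main work is citing Takahasi's theorem in a form precise enough to guarantee both finiteness and effective enumerability of $\alge(H)$.
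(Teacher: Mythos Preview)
Your proposal is correct and follows essentially the same route as the paper: invoke Takahasi's theorem to reduce the supremum defining $\dc_{F_n}(H)$ to a maximum over the finite, computable family $\alge_{F_n}(H)$, using that every overgroup $K$ of $H$ admits some $E\in\alge_{F_n}(H)$ as a free factor, whence $\rrk(E)\leqslant\rrk(K)$. The only cosmetic difference is that you work directly with the full fringe $\mathcal{O}_{F_n}(H)$ (all folded quotients of $\Gamma(H)$) rather than first cleaning it down to $\alge_{F_n}(H)$; since $\alge_{F_n}(H)\subseteq\mathcal{O}_{F_n}(H)$ and every fringe element contains an algebraic extension as a free factor, the minimum reduced rank over either family is the same, so this shortcut is harmless---and you should handle the trivial edge case $\rrk(H)=0$ separately, where $\dc_{F_n}(H)=1$ by convention.
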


The question whether $\di_{F_n}(\cdot)$ is computable in free groups (related to the question whether the corresponding supremum is a maximum or not) seems to be much more delicate. In Section~\ref{free} we refer to a quite similar question, which was successfully solved recently by S. Ivanov in~\cite{Ivanov}. However, at the time of writing, we do not know how to use this result to eventually compute $\di_{F_n}(\cdot)$.

Then, we concentrate in free-abelian times free groups, $G=\Z^m\times F_n$, where the situation is richer and trickier because, for $m\geqslant 1$, $n\geqslant 2$, $G$ is known to be non-Howson (the easy example $G=\Z \times F_2=\langle t\rangle\times \langle a,b\rangle$, $H=\langle a,b\rangle$, $K=\langle ta, b\rangle$, and $H\cap K=\langle w(a,b) \mid |w|_a=0\rangle =\langle a^{-n}ba^n, n\in \Z\rangle$ already appears in~\cite{BK} attributed to Moldavanskii). Denoting by $\pi\colon G\twoheadrightarrow F_n$ the natural projection, in Section~\ref{fatf-c} we study the degree of compression and prove the following result:

\begin{thm}\label{main-C}
For any given $H\leqslant_{fg} G=\Z^m\times F_n$, and using the notation from Section~\ref{fatf-c}, we have
 $$
\dc_G(H)=\rrk(H) \,\,\left/ \min_{J\in \mathcal{AE}_{F_n}(H\pi)} \{\rrk(J)+d(A,B,U_J)\}.\right.
 $$
Moreover, $\dc_G(H)$ is algorithmically computable; more precisely, there is an algorithm which, on input $h_1,\ldots ,h_r\in G$, it computes the value of $\dc_G(\langle h_1, \ldots ,h_r\rangle)$ and outputs a basis of a subgroup $K\leqslant_{fg} G$ where that supremum is attained.
\end{thm}

In Section~\ref{fatf-i}, we study the degree of inertia, also for groups of the form $G=\Z^m\times F_n$, and get the following result:

\begin{thm}\label{mainI}
Let $H{\leqslant}_{fg} \, G=\Z^m\times F_n$, and let $L_H=H\cap \Z^m$.
\begin{itemize}
\item[(i)] If $\rk(H\pi)\leqslant 1$ then $\di_G(H)=1$;
\item[(ii)] if $\rk(H\pi)\geqslant 2$ and $[\Z^m : L_H]=\infty$ then $\di_G(H)=\infty$;
\item[(iii)] if $\rk(H\pi)\geqslant 2$ and $[\Z^m : L_H ]=l<\infty$ then $\di_G(H)\leqslant l\di_{F_n}(H\pi)$.
\end{itemize}
\end{thm}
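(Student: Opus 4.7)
The plan is to treat the three cases separately, using throughout the standard structural fact that a finitely generated subgroup $A\leqslant G=\Z^m\times F_n$ splits as $A=L_A\times A^{\circ}$ with $L_A=A\cap\Z^m$ and $A^{\circ}\cong A\pi$ free, so that $\rk(A)=\rk(L_A)+\rk(A\pi)$.

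For (i), the observation is that $H$ is itself abelian: if $H\pi=1$ then $H\leqslant\Z^m$, and if $H\pi$ is infinite cyclic then $H=\langle t^c u,\, L_H\rangle\cong\Z^{1+\rk(L_H)}$ for a generator $u$ of $H\pi$ and some $c\in\Z^m$. Any $K\leqslant_{fg} G$ is isomorphic to $\Z^{a'}\times F_{b'}$, and $H\cap K$ is an abelian subgroup of it. For $b'\geqslant 2$ the centre of $K$ equals $\Z^{a'}$, so every abelian subgroup lies inside $\Z^{a'}\times\langle w\rangle$ for some cyclic $\langle w\rangle\leqslant F_{b'}$, of rank at most $a'+1\leqslant\rk(K)$; for $b'\leqslant 1$, $K$ is already abelian free and subgroups have rank $\leqslant\rk(K)$. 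Either way $\rk(H\cap K)\leqslant\rk(K)$, so $\di_G(H)=1$.

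The hardest part is (ii), which asks for a concrete family of witnesses. The idea is to combine the two free generators in $H\pi$ with a direction in $\Z^m$ that escapes $L_H$, so that the intersections $H\cap K_N$ have $F_n$-parts that are large-index subgroups of a fixed rank-two free group. Since $\rk(H\pi)\geqslant 2$, pick $u,v\in H\pi$ generating a free subgroup of rank two and fix lifts $t^a u,\, t^b v\in H$. Since $[\Z^m:L_H]=\infty$, pick $c\in\Z^m$ of infinite order in $\Z^m/L_H$. Set
 $$
K_N \;=\; \big\langle\, t^{Na+c}u^N,\ t^b v,\ t^{Nc}\,\big\rangle.
 $$
A direct calculation gives $K_N\pi=\langle u^N,v\rangle\cong F_2$ and $K_N\cap\Z^m=\langle t^{Nc}\rangle$, so $K_N\cong\Z\times F_2$ and $\rrk(K_N)=2$. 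Checking which elements of $K_N$ actually lie in $H$ reduces, using that $c$ has infinite order modulo $L_H$, to requiring that the $u^N$-exponent sum of the $F_n$-part be divisible by $N$; hence $(H\cap K_N)\pi$ is the kernel of the surjection $\langle u^N,v\rangle\twoheadrightarrow\Z/N$ sending $u^N\mapsto 1,\ v\mapsto 0$, which by Nielsen--Schreier has rank $N+1$. Combined with $L_{H\cap K_N}=L_H\cap\langle Nc\rangle=0$, this yields $\rrk(H\cap K_N)=N$, so $\rrk(H\cap K_N)/\rrk(K_N)=N/2\to\infty$.

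For (iii) the central identity is
 $$
(H\cap K)\pi \;=\; \ker\!\big(\,H\pi\cap K\pi\longrightarrow\Z^m/(L_H+L_K)\,\big),
 $$
where, using a splitting of $1\to L_H\to H\to H\pi\to 1$ (existing as $H\pi$ is free) and similarly for $K$, one sends $w\mapsto a_w-a'_w\bmod(L_H+L_K)$ with $a_w,a'_w$ the $\Z^m$-parts of lifts of $w$ into $H$ and $K$. Since $L_H+L_K\supseteq L_H$, the target has order at most $l$, so $[H\pi\cap K\pi:(H\cap K)\pi]\leqslant l$, and Nielsen--Schreier yields $\rrk((H\cap K)\pi)\leqslant l\cdot\rrk(H\pi\cap K\pi)\leqslant l\cdot\di_{F_n}(H\pi)\cdot\rrk(K\pi)$ by definition of $\di_{F_n}$. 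Combining this with $\rk(L_H\cap L_K)\leqslant\rk(L_K)$ and the splitting formula $\rk(\cdot)=\rk(L_\cdot)+\rk((\cdot)\pi)$, and using that $l\,\di_{F_n}(H\pi)\geqslant 1$, assembles into $\rrk(H\cap K)\leqslant l\,\di_{F_n}(H\pi)\cdot\rrk(K)$, as desired. The remaining care is routine bookkeeping in the edge cases where $K\pi$ or $(H\cap K)\pi$ is trivial or cyclic.
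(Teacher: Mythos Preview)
Your proof is correct and follows essentially the same strategy as the paper in all three parts: (i) reduces to the fact that $(H\cap K)\pi$ is cyclic; (ii) builds rank-$3$ witnesses $K_N$ so that $(H\cap K_N)\pi$ sits with index $N$ inside a fixed rank-$2$ free group; and (iii) uses the identity $(H\cap K)\pi=\ker\big(H\pi\cap K\pi\to\Z^m/(L_H+L_K)\big)$ together with Schreier and the bound $\rk(L_H\cap L_K)\leqslant\rk(L_K)$. The only cosmetic differences are that in (ii) the paper keeps $K_N\pi=\langle u_1,u_2\rangle$ fixed and perturbs one abelian coordinate by a primitive vector $b_{s+1}\notin\tilde L_H$, whereas you vary the free part to $\langle u^N,v\rangle$ and perturb by a $c$ of infinite order modulo $L_H$; and in (iii) the paper packages the final inequality via the mediant Lemma~\ref{BadBoy}, while you use directly that $l\,\di_{F_n}(H\pi)\geqslant 1$ to absorb the $\rk(L_K)$ term.
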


We conjecture that the inequality in Theorem~\ref{mainI}~(iii) is, in fact, an equality, i.e.,

\begin{conj}\label{=}
For $G=\Z^m\times F_n$ and $H{\leqslant}_{fg} \, G$ with $\rk(H\pi)\geqslant 2$, we have the equality $\di_G(H)=[\Z^m : L_H ]\cdot \di_{F_n}(H\pi)$, where $L_H=H\cap \Z^m$.
\end{conj}

Unfortunately, we cannot complete a general proof for this equality. Instead, we get an approximation to it by introducing a technical modification to the definition of $\di_G(\cdot)$, the so-called \emph{restricted degree of inertia}:

\begin{dfn}\label{res def di}
Let $G$ be a group and $\pi\colon G \twoheadrightarrow G/Z(G)$ the projection modulo its center $Z(G)\unlhd G$. Let $H\leqslant_{fg} G$ be such that $H\pi$ is not virtually cyclic and $H\pi \nleqslant [G\pi, G\pi]$. The \textit{restricted degree of inertia of $H$ in $G$} is $\di'_G(H) =\sup_K\{\trk(H \cap K)/\trk(K)\}$, where the supremum is taken over all $K\leqslant_{fg}G$ satisfying $H \cap K \leqslant_{fg} G$, $[H\pi : H\pi \cap K\pi] = \infty$, and $H\pi \cap K\pi \nleqslant [G\pi, G\pi]$ (again, understanding $0/0=1$).
\end{dfn}

\begin{rem}
The conditions on the projection of the subgroup $H\pi$ not to be virtually cyclic and not to be contained in the commutator $[G\pi, G\pi]$ are just to make sure the supremum is not over the empty set: assuming these two conditions, let $h\in H$ be such that $h\pi\notin [G\pi, G\pi]$ and take $K=\langle h \rangle\leqslant G$; clearly, $K=H\cap K=\langle h \rangle$ is cyclic and so finitely generated, $H\pi\cap K\pi =\langle h\pi \rangle \leqslant_\infty H\pi$ because $H\pi$ is not virtually cyclic, and $h\pi\in H\pi\nleqslant [G\pi, G\pi]$. Moreover, $\trk(H \cap K)/\trk(K)=0/0=1$ and so, $\di'_G(H)\geqslant 1$.

Note that, the two cases of our interest are the following: (i) $G=F_n$, $n\geqslant 2$, we have $Z(G)=1$, $G/Z(G)=F_n$, and the definition applies to any non-cyclic subgroup $H\leqslant G$ such that $H\not\leqslant [F_n, F_n]$; and (ii) $G=\Z^m \times F_n$, $n\geqslant 2$, we have $Z(G)=\Z^m$, $G/Z(G)=F_n$, and the definition applies to any subgroup $H\leqslant G$ such that $H\pi$ is not cyclic and $H\pi \not\leqslant [F_n, F_n]$.
\end{rem}

Observe that the definition of restricted degree of inertia coincides with that of degree of inertia, except for the extra technical conditions required to the subgroups $K$ over which the supremum runs. Hence, $1\leqslant \di'_G(H)\leqslant \di_G(H)$ and we conjecture that, at least in the free and free-abelian times free cases, they do coincide.

\begin{conj}\label{di'=di}
For $G=F_n$ and $G=\Z^m\times F_n$, $\di'_G(\cdot )=\di_G(\cdot )$.
\end{conj}

Our main result in Section~\ref{section for rdi} is the desired equality from Theorem~\ref{mainI}~(iii), but expressed with the restricted degrees of inertia:

\begin{thm}\label{mainII}
Let $H{\leqslant}_{fg} G=\Z^m\times F_n$ be such that $H\pi$ is not cyclic and $H\pi \nleqslant [F_n, F_n]$, and let $L_H=H\cap \Z^m$.
\begin{enumerate}
\item[(i)] If $[\Z^m :L_H] =\infty$ then $\di_G'(H)=\infty$;
\item[(ii)] if $[\Z^m :L_H] =l$ then $\di_G'(H)=l\di_{F_n}'(H\pi)$.
\end{enumerate}
\end{thm}

Theorem~\ref{mainII} is the most involved and technical result in the paper. We hope in the future some new ideas come up allowing to avoid the technical working conditions (namely, $H\pi\cap K\pi$ having infinite index in $H\pi$, and not being contained in $[F_n,F_n]$) and to recycle the proof of Theorem~\ref{mainII} into a proof for Conjecture~\ref{=} (or, better, for Conjecture~\ref{di'=di}).

We conclude the present section with a straightforward result which will be used later.

\begin{lemma}\label{isomorphism}
Let $\phi \colon G_1\to G_2$ be an isomorphism of groups. For every $H\leqslant_{fg} G_1$,
\begin{itemize}
\item[(i)] $\dc_{G_2}(H\phi)=\dc_{G_1}(H)$;
\item[(ii)] $\di_{G_2}(H\phi)=\di_{G_1}(H)$;
\item[(iii)] with the extra assumptions that $H\pi_1$ is not virtually cyclic and $H\pi_1 \nleqslant [G_1\pi_1, G_1\pi_1]$, where $\pi_i \colon G_i \to G_i/Z(G_i)$ is the natural projection modulo the center $Z(G_i)\unlhd G_i$, $i=1,2$, we have $\di'_{G_2}(H\phi)=\di'_{G_1}(H)$.
\end{itemize}
\end{lemma}

\begin{proof}
For every $K\leqslant_{fg} G_1$ with $H\leqslant K$, we have $K\phi\leqslant_{fg}G_2$ and $H\phi\leqslant K\phi$ so, $\rrk(H)=\rrk(H\phi)\leqslant \dc_{G_2}(H\phi)\cdot \rrk(K\phi)= \dc_{G_2}(H\phi)\cdot \rrk(K)$. Therefore, $\dc_{G_1}(H)\leqslant \dc_{G_2}(H\phi)$. By symmetry, we get (i).

Similarly, for every $K\leqslant_{fg} G_1$ with $H\cap K\leqslant_{fg} G_1$, we have $K\phi\leqslant_{fg}G_2$ and $H\phi \cap K\phi =(H\cap K)\phi\leqslant_{fg} G_2$ so, $\rrk(H\cap K)=\rrk((H\cap K)\phi)=\rrk(H\phi\cap K\phi)\leqslant \di_{G_2}(H\phi)\cdot \rrk(K\phi)=\di_{G_2}(H\phi)\cdot \rrk(K)$. Therefore, $\di_{G_1}(H)\leqslant \di_{G_2}(H\phi)$. By symmetry, we deduce (ii).

The argument in the previous paragraph also shows (iii), provided we see that the technical conditions in the supremum of the definition of restricted degree of inertia get preserved under $\phi$. And in fact they do: suppose $K\leqslant_{fg} G_1$ is such that $H \cap K \leqslant_{fg} G_1$, $H\pi_1 \cap K\pi_1 \leqslant_{\infty} H\pi_1$ and $H\pi_1 \cap K\pi_1 \nleqslant [G_1\pi_1, G_1\pi_1]$; since, $\phi$ is an isomorphism, $K\phi \leqslant_{fg} G_2$ and $H\phi \cap K\phi = (H \cap K)\phi \leqslant_{fg} G_2$; also, since $\phi$ maps $Z(G_1)$ onto $Z(G_2)$, there exists an isomorphism $\bar{\phi} \colon G_1/Z(G_1) \to G_2/Z(G_2)$ such that $\pi_1\bar{\phi}=\phi \pi_2$ and, hence, $H\pi_1 \cap K\pi_1 \leqslant_{\infty} H\pi_1$ implies $H\phi \pi_2 \cap K\phi \pi_2 = H\pi_1 \bar{\phi} \cap K\pi_1 \bar{\phi}= (H\pi_1 \cap K\pi_1)\bar{\phi} \leqslant_\infty H\pi_1 \bar{\phi} = H\phi\pi_2$; finally, $H\phi \pi_2 \cap K\phi \pi_2 = H\pi_1\bar{\phi} \cap K\pi_1\bar{\phi} = (H\pi_1 \cap K\pi_1)\bar{\phi} \nleqslant [G_1\pi_1, G_1\pi_1]\bar{\phi} = [G_1\phi\pi_2, G_1\phi\pi_2] = [G_2 \pi_2, G_2\pi_2]$. This completes the proof of the lemma.
\end{proof}

\begin{cor}\label{conjugation}
Let $G$ be a group. For every $H\leqslant_{fg} G$ and every $g\in G$, $\dc_G(H^g)=\dc_G(H)$, $\di_G(H^g)=\di_G(H)$, and (with the extra assumptions on $H$ and so, on $H^g$) $\di'_G(H^g)=\di'_G(H)$. \qed
\end{cor}

\section{The free case}\label{free}

For all the paper, we fix an alphabet of $n$ letters, $X=\{x_1, \ldots ,x_n\}$, and consider the free group on it, $F(X)$, also denoted by $F_n$. In the present section we study the degrees of compression and inertia in the context of the free group, i.e., the functions $\dc_{F_n}(\cdot)$ and $\di_{F_n}(\cdot)$.

Hanna Neumann proved in~\cite{HN} that $\rrk(H\cap K)\leqslant 2\rrk(H)\rrk(K)$, for every $H,K\leqslant_{fg} F_n$. And the same assertion removing the factor ``2" became soon known as the \emph{Hanna Neumann conjecture}. This has been a major problem in Geometric Group Theory, with lots of partial results and improvements appearing in the literature since then. An interesting one was done by W. Neumann in~\cite{WN}, who proved the stronger fact $\sum_{x\in S} \rrk(H\cap K^s)\leqslant 2\rrk(H)\rrk(K)$ (known as the strengthened Hanna Neumann inequality), where $S$ is any set of double coset representatives of $F_n$ modulo $H$ on the left and $K$ on the right (i.e., $S\subseteq F_n$ contains one and only one element in each double coset $H\backslash F_n/K$); in particular, this implies that, for all $H,K\leqslant_{fg} F_n$, \emph{all except finitely many} of the intersections $H\cap K^x$, $x\in F_n$, are trivial or cyclic. Few years ago the Hanna Neumann conjecture, even in its strengthened version, has been completely resolved in the positive, independently by J. Friedman~\cite{Friedman} and by I. Mineyev~\cite{Mineyev} (see also W. Dicks~\cite{Dicks}). This can be interpreted as the following upper bound for $\dc_{F_n}(H)$ and $\di_{F_n}(H)$ in terms of the subgroup $H\leqslant_{fg}F_n$:

\begin{obs}
For $H\leqslant_{fg} F_n$, we have $1\leqslant \dc_{F_n} (H)\leqslant \di_{F_n}(H)\leqslant \rrk (H)$.
\end{obs}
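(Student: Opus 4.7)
The plan is to verify the three inequalities separately, all of them being essentially immediate consequences of the definitions combined with the (now proven) Hanna Neumann conjecture just recalled.

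First, the leftmost bound $1\leqslant \dc_{F_n}(H)$ is obtained by specializing the supremum in the definition of $\dc_G(H)$ to the single choice $K=H$, yielding the ratio $\rrk(H)/\rrk(H)$, which equals $1$ under the convention $0/0=1$; the same trick works for the definition of $\di_G(H)$. The middle inequality $\dc_{F_n}(H)\leqslant \di_{F_n}(H)$ is Observation~\ref{elementary}(i), which in turn comes directly from the fact that any $K\geqslant H$ satisfies $H\cap K=H$, so every term appearing in the supremum computing $\dc_{F_n}(H)$ also appears in the (wider) supremum computing $\di_{F_n}(H)$.

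The genuinely non-trivial inequality is $\di_{F_n}(H)\leqslant \rrk(H)$, and this is precisely where the Friedman--Mineyev theorem (formerly the Hanna Neumann conjecture) plays its role. The plan is: for any $K\leqslant_{fg}F_n$ (which, since $F_n$ is Howson, automatically satisfies $H\cap K\leqslant_{fg}F_n$), the inequality $\rrk(H\cap K)\leqslant \rrk(H)\cdot\rrk(K)$ holds; dividing by $\rrk(K)$, which is a positive integer once we restrict to non-cyclic $K$ as explicitly allowed in the definition of $\di_G$, gives $\rrk(H\cap K)/\rrk(K)\leqslant \rrk(H)$, and taking the supremum over all admissible $K$ yields the claim.

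No real obstacle is expected: the statement is essentially a quantitative repackaging of the Hanna Neumann inequality, and the only subtle point to mention explicitly is the handling of cyclic $K$ and of the convention $0/0=1$, both already accommodated by the remarks made right after the definition of $\dc_G$ and $\di_G$.
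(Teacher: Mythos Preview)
Your proposal is correct and matches the paper's own reasoning: the paper does not give a formal proof either, simply presenting the observation as the natural reinterpretation of the Friedman--Mineyev inequality $\rrk(H\cap K)\leqslant \rrk(H)\,\rrk(K)$, with the two leftmost inequalities already recorded in Observation~\ref{elementary}(i). Your handling of the non-cyclic restriction and the $0/0$ convention is exactly in line with the remarks the paper makes after the definition of $\dc_G$ and $\di_G$.
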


Friedman--Mineyev's inequality is easily seen to be tight (consider, for example, the subgroups $H=\langle a, b^{-1}ab\rangle$ and $K=\langle b, a^2, aba\rangle$ of $F_2$, and its intersection $H\cap K=\langle a^2, b^{-1}a^2b, b^{-1}aba\rangle$); therefore, it can be interpreted in the following way: ``the smallest possible multiplicative constant $\alpha\in \mathbb{R}$ satisfying $\rrk(H\cap K)\leqslant \alpha \rrk(H)\rrk(K)$, for every $H,K\leqslant_{fg} F_n$, is $\alpha=1$". Now fix the subgroup $H$: by definition, the smallest possible constant $\alpha\in \mathbb{R}$ satisfying $\rrk(H\cap K)\leqslant \alpha \rrk(H)\rrk(K)$, for every $K\leqslant_{fg} F_n$, is $\alpha=\di_{F_n}(H)/\rrk(H)$.

S. Ivanov~\cite{Ivanov} already considered and studied the strengthened version of what we call here the degree of inertia. He defined the \emph{Walter Neumann coefficient} of $H\leqslant_{fg} F_n$ as $\sigma(H):= \sup_{K\leqslant_{fg}F_n} \rrk(H,K)/\rrk(H)\rrk(K)$, where $\rrk(H,K)=\sum_{s\in H\backslash F_n/K} \rrk(H\cap K^s)$ (understanding $0/0=1$). In other words, $\sigma(H)$ is the smallest possible constant $\alpha\in \mathbb{R}$ such that $\rrk(H, K)\leqslant \alpha \rrk(H)\rrk(K)$, for every $K\leqslant_{fg} F_n$. Using linear programming techniques, Ivanov was able to prove the following remarkable result:

\begin{thm}[Ivanov, \cite{Ivanov}]
For any finitely generated free group $F_n$, the function $\sigma$ is computable and the supremum is a maximum; more precisely, there is an algorithm which, on input $h_1,\ldots ,h_r\in F_n$, it computes the value of $\sigma(\langle h_1, \ldots ,h_r\rangle)$ and outputs a free basis of a subgroup $K\leqslant_{fg} F_n$ where that supremum is attained.
\end{thm}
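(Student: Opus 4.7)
The plan is to encode the optimisation defining $\sigma(H)$ as a rational linear-fractional program on the combinatorial data of Stallings graphs, and to read off both computability and attainment of the supremum from the standard theory of linear programming.

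First, I would fix the (finite, folded, connected, basepointed) Stallings graph $\Gamma_H$ of $H$, so that $H=\pi_1(\Gamma_H)$. For any $K\leqslant_{fg}F_n$ with Stallings graph $\Gamma_K$, the fibre product $\Gamma_{HK}:=\Gamma_H\times_{R_n}\Gamma_K$ (vertices: label-compatible pairs $(v,w)$; edges forced by local label compatibility) encodes $\rrk(H,K)$ combinatorially: its connected components are in bijection with those double cosets $s\in H\backslash F_n/K$ for which $H\cap K^s$ is non-trivial, and the reduced rank of each component equals $\rrk(H\cap K^s)$. A short Euler-characteristic computation then gives
 $$
 \rrk(H,K) \;=\; -\chi(\Gamma_{HK}) \,+\, N_{\mathrm{tree}}(\Gamma_{HK}),
 $$
where $N_{\mathrm{tree}}$ counts tree components of the pullback (including isolated vertices).

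Next, I would parametrise $\Gamma_K$ by the local types of its vertices: declare two vertices equivalent if they agree on which labels carry an outgoing edge and which carry an incoming edge. Since $\Gamma_K$ is folded there are at most $2^{2n}$ types $t_1,\ldots,t_N$; let $n_j$ be the number of vertices of $\Gamma_K$ of type $t_j$. The vertex and edge counts of both $\Gamma_K$ and $\Gamma_{HK}$ are then linear functions of $(n_1,\ldots,n_N)$: once the type $t_j$ of $w$ is fixed, the local contribution at each pair $(v,w)$ (with $v\in V(\Gamma_H)$) to the pullback's vertex and edge counts depends only on $v$ and $t_j$. The balance constraints (each label contributes equally many sources and targets) together with the folding requirements translate into linear (in)equalities, cutting out a rational polyhedral cone $\mathcal{P}\subseteq\mathbb{R}^{N}_{\geqslant 0}$ of feasible type distributions.

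On $\mathcal{P}$, the objective $\rrk(H,K)/(\rrk(H)\rrk(K))$ becomes a linear-fractional expression (note that $\rrk(H)$ is a fixed constant). I would then apply the Charnes--Cooper transformation to turn this into a genuine linear program over a rational polytope, and invoke the fundamental theorem of LP: the optimum is attained at a vertex of a rational polytope, hence is rational and computable (e.g.\ by the simplex method or vertex enumeration). From the coordinates of an optimal vertex, one reconstructs an explicit folded labelled graph with the prescribed type multiplicities (after scaling denominators) and matching outgoing and incoming half-edges, and reads off a free basis of the resulting $K$ from a spanning tree.

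The main obstacle is precisely the non-linear correction $N_{\mathrm{tree}}(\Gamma_{HK})$, which blocks a direct linearisation of the objective. I would overcome this by a surgery argument: show that the supremum may be restricted to those $K$ whose Stallings graph is a core graph (no degree-$1$ vertices) and whose pullback $\Gamma_{HK}$ has no non-isolated tree components, by iteratively trimming dangling trees in $\Gamma_K$ without decreasing the ratio $\rrk(H,K)/(\rrk(H)\rrk(K))$. Under these reductions $N_{\mathrm{tree}}$ collapses to the count of isolated pullback vertices, which is a transparent linear function of $(n_j)$ and combines cleanly with $-\chi$, after which the LP argument delivers both the computability of $\sigma(H)$ and an explicit witness $K$.
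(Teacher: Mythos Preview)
This statement is quoted from Ivanov and is not proved in the paper; the authors say only that ``Ivanov's proof is involved and technical'' and then move on. So there is no in-paper argument to compare against, and your sketch has to stand on its own.

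Your high-level plan does match the shape of Ivanov's approach: encode $\rrk(H,K)$ and $\rrk(K)$ through the fibre product $\Gamma_{HK}=\Gamma_H\times_{R_n}\Gamma_K$, reduce to a linear-fractional programme on combinatorial parameters of $\Gamma_K$, and appeal to LP theory. The identity $\rrk(H,K)=-\chi(\Gamma_{HK})+N_{\mathrm{tree}}(\Gamma_{HK})$ is correct, and it is true that $-\chi(\Gamma_{HK})$, $\rrk(K)=-\chi(\Gamma_K)$, and the isolated-vertex count of $\Gamma_{HK}$ depend only on the vertex-type vector $(n_j)$ of $\Gamma_K$. The gap is exactly at the step you flag, and the fix you propose does not close it. First, $N_{\mathrm{tree}}(\Gamma_{HK})$ is \emph{not} a function of $(n_j)$: two matchings of the same half-edge data can yield pullbacks with different numbers of non-isolated tree components, so the objective you write is not even well-defined on your polytope $\mathcal{P}$. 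Second, the surgery you describe is vacuous as stated --- a Stallings core graph already has no dangling trees to trim, and passing from a non-core $\Gamma_K$ to its core leaves $K$, $\rrk(K)$ and $\rrk(H,K)$ unchanged while $\Gamma_{HK}$ can still have non-isolated tree components. Third, even if one reinterprets the surgery as some unspecified modification of $\Gamma_K$ forcing every pullback tree to be an isolated vertex, this drives $N_{\mathrm{tree}}$ \emph{down} to $N_{\mathrm{isolated}}$; for fixed type vector $\rrk(H,K)$ is increasing in $N_{\mathrm{tree}}$ while $\rrk(K)$ is constant, so such a move can only \emph{lower} the ratio --- the wrong direction for restricting a supremum. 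What is actually needed, and what occupies the bulk of Ivanov's paper, is either a richer system of LP variables in which the tree correction becomes genuinely linear, or an argument that the supremum is already realised among $K$ for which the correction is controllable. Your outline locates the obstacle correctly but does not overcome it.
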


Ivanov's proof is involved and technical. Although it looks quite similar, we have been unable to adapt Ivanov's arguments to answer any of the following questions which, as far as we know, remain open:

\begin{que}
Is the function $\di_{F_n}(\cdot)$ computable? Is that supremum always a maximum? More precisely, is there an algorithm which, on input $h_1,\ldots ,h_r\in F_n$, computes the value of $\di_{F_n} (\langle h_1, \ldots ,h_r\rangle)$? Or even more, it outputs a free basis of a subgroup $K\leqslant_{fg} F_n$ where it is attained?
\end{que}

The corresponding questions for the degree of compression are much easier and can be established with the use of Stallings' graphs (we assume the reader is familiar with these techniques), algebraic extensions, and Takahasi's Theorem.

\begin{dfn}
Let $H\leqslant_{fg} K\leqslant_{fg} F_n$. If $H$ is a free factor of $K$ we write $H\leqslant_{ff}K$. On the other extreme, the extension $H\leqslant K$ is said to be \emph{algebraic}, denoted by $H\leqslant_{alg} K$, if $H$ is not contained in any proper free factor of $K$, i.e., if $H\leqslant A\leqslant K=A*B$ implies $B=1$; we denote by $\alge_{F_n}(H)$ the set of algebraic extensions of $H$ in $F_n$.
\end{dfn}

\begin{thm}[Takahasi, \cite{T}; see also~\cite{MVW}]\label{takahasi}
Every $H\leqslant_{fg} F_n$ has finitely many algebraic extensions, say $\alge_{F_n}(H)=\{H=H_0, H_1, \ldots ,H_r\}$ ($r$ depending on $H$), each $H_i$ is finitely generated, and free bases for all of them are algorithmically computable from a given set of generators for $H$. Furthermore, for every extension $H\leqslant K\leqslant F_n$, there exists a unique (and computable) $0\leqslant i\leqslant r$ such that $H\leqslant_{alg} H_i \leqslant_{ff} K$; this $H_i$ is called the \emph{$K$-algebraic closure of $H$}.
\end{thm}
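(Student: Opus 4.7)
My plan is to use the Stallings-graph machinery. Given $H\leqslant_{fg} F_n$ with generators $h_1,\ldots,h_r$, I would first compute its Stallings graph $\Gamma_H$ by folding the bouquet of the $h_i$'s, obtaining a finite folded pointed $X$-labelled graph whose closed reduced loops at the basepoint spell exactly the elements of $H$. The whole argument will be driven by the (finitely many) vertex partitions of $\Gamma_H$.

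The core structural observation is the following. For every extension $H\leqslant K\leqslant F_n$, the universal property of Stallings graphs yields a unique $X$-labelled pointed morphism $\varphi\colon \Gamma_H\to \Gamma_K$. Let $\sim_K$ be the partition of $V(\Gamma_H)$ given by its fibres, let $\Gamma_H/{\sim_K}$ denote the folded quotient, and let $H_{\sim_K}\leqslant F_n$ be the subgroup it represents. Then $H\leqslant H_{\sim_K}\leqslant K$; moreover the embedding $\Gamma_H/{\sim_K}\hookrightarrow \Gamma_K$ yields $H_{\sim_K}\leqslant_{ff} K$ (a folded pointed subgraph of a Stallings graph corresponds to a free factor of the ambient subgroup, since outside the subgraph only tree attachments and further loops occur), and the minimality of $\sim_K$ among partitions compatible with $\varphi$ forces $H\leqslant_{alg} H_{\sim_K}$ (any proper intermediate free factor would give a strictly finer partition on the fibres, contradicting the definition of $\sim_K$).

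Since $|V(\Gamma_H)|$ is finite, the set of partitions of $V(\Gamma_H)$ is finite, so the family $\{H_\sim\}$ is finite; in particular $\alge_{F_n}(H)\subseteq \{H_\sim\}$ is finite. Each $H_\sim$ is finitely generated (it is the fundamental group of a finite folded graph) and a free basis is read off from a spanning tree of $\Gamma_H/{\sim}$. Algorithmically, one enumerates all partitions $\sim$ of $V(\Gamma_H)$, folds to obtain $\Gamma_H/{\sim}$, extracts a basis of $H_\sim$, and filters the list by discarding duplicates and those $H_\sim$ for which $H$ is contained in a proper free factor (detectable on the graph via cut vertices and pendant arcs). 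The $K$-algebraic closure is then the unique $H_i\in \alge_{F_n}(H)$ whose Stallings graph is isomorphic to the image $\varphi(\Gamma_H)\subseteq \Gamma_K$; uniqueness follows because any two candidates $H_1,H_2$ with $H\leqslant_{alg}H_j\leqslant_{ff}K$ must both coincide with this common image inside $\Gamma_K$.

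The principal obstacle is making rigorous the two non-trivial assertions ``folded pointed subgraph inclusion implies free factor inclusion" and ``minimal fibre partition implies algebraic extension". Both are standard in the Stallings/Ventura circle of ideas: the first follows because the complement of a folded pointed subgraph inside a Stallings graph consists only of tree attachments and extra loops, whose removal corresponds to Nielsen transformations exhibiting an explicit free factor decomposition; the second follows by observing that a witness $H\leqslant A\leqslant H_{\sim_K}=A*B$ with $B\neq 1$ would let $\Gamma_H$ factor through $\Gamma_A$, producing a strictly finer fibre partition and contradicting the choice of $\sim_K$. Once these are in place, the theorem, including computability, follows formally.
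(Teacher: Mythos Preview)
Your overall strategy---build the Stallings graph $\Gamma_H$, enumerate its folded quotients, and filter---is exactly the one the paper sketches. The gap is in your ``core structural observation'': it is \emph{not} true that $H\leqslant_{alg} H_{\sim_K}$ when $\sim_K$ is the fibre partition of the canonical morphism $\varphi\colon\Gamma_H\to\Gamma_K$. Take $H=\langle ab\rangle\leqslant K=F_2=\langle a,b\rangle$. Here $\Gamma_H$ has two vertices, $\Gamma_K$ is the bouquet, $\varphi$ collapses both vertices, and the folded quotient is the full bouquet, so $H_{\sim_K}=F_2$. But $\langle ab\rangle$ is itself a proper free factor of $F_2$ (via the basis $\{ab,b\}$), so the extension $\langle ab\rangle\leqslant F_2$ is \emph{not} algebraic. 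Your justification (``a strictly finer partition would contradict the choice of $\sim_K$'') is circular: $\sim_K$ is the \emph{coarsest} partition through which $\varphi$ factors, so the existence of a strictly finer one (here the discrete partition, corresponding to $A=H$) is no contradiction at all. This also undermines your uniqueness argument, which relied on identifying the $K$-closure with the image of $\varphi$.

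The paper sidesteps this by making a weaker claim at that stage: for each overgroup $K$ there is \emph{some} quotient $H_i$ in the fringe $\mathcal{O}_{F_n}(H)$ with $H\leqslant H_i\leqslant_{ff}K$ (not necessarily algebraic). It then cleans the fringe by deleting every $H_j$ admitting some other $H_i\leqslant_{ff}H_j$; what survives is exactly $\alge_{F_n}(H)$, and the free-factor property persists through this deletion. Your proposed filtering test (``cut vertices and pendant arcs'') does not detect algebraicity either: in the same example the bouquet has neither cut vertices nor pendant arcs, yet $\langle ab\rangle\leqslant F_2$ fails to be algebraic. The correct algorithmic test is the pairwise free-factor check among fringe members, as in the paper. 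Once you replace your direct algebraicity claim by this two-step ``overproduce then prune'' argument, the rest of your outline goes through.
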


\begin{proof}[Sketch of the proof]
The original proof by M. Takahasi~\cite{T} was combinatorial, playing with words and cancellation in the free group. We sketch the modern proof given in~\cite{MVW} following ideas of Ventura~\cite{V}, Kapovich--Miasnikov~\cite{KM} and Margolis--Sapir--Weil~\cite{MSW}.

We have the alphabet $X$ fixed as a free basis for the ambient free group, $F_n=F(X)$. Now, given generators for $H\leqslant_{fg} F(X)$, one can compute the Stallings' graph $\Gamma(H)$ for $H$ (denote the basepoint by $\bp$). Attaching the necessary infinite hanging trees so that it becomes a complete graph (i.e., with all vertices having an incoming and an outgoing edge labelled $x_i$ for every $x_i\in X$), we obtain the Schreier graph $\chi(F_n, H, X)$ (which is finite if and only if $H$ is of finite index in $F_n$). Of course, $\chi(F_n, H, X)$ is a covering, $\chi(F_n, H, X)\twoheadrightarrow R(X)$, of the bouquet $R(X)$, the graph with a single vertex and one loop labelled $x_i$ for every $x_i\in X$; more precisely, it is the covering of $R(X)$ corresponding to the subgroup $H\leqslant_{fg} \pi(R(X))=F_n$. By standard covering theory, $K\leftrightarrow \chi(F_n, K, X)$ is a bijection between intermediate subgroups $H\leqslant K\leqslant F_n$ and intermediate coverings, $\chi(F_n, H, X)\twoheadrightarrow \chi(F_n, K, X) \twoheadrightarrow R(X)$ (mapping finitely generated subgroups to graphs with finite core, and viceversa).

Fix $H\leqslant_{fg} K\leqslant_{fg} F_n$, and consider their Stallings' graphs $\Gamma(H)=\core(\chi(F_n, H, X))$ and $\Gamma(K)=\core( \chi(F_n,K,X))$, both being finite graphs. The above bijection means that $\chi(F_n, K,X)$ is a quotient of $\chi(F_n, H,X)$, i.e., the result of $\chi(F_n, H,X)$ after identifying vertices and edges in a compatible way (i.e., modulo a \emph{congruence}, an equivalence relation on the set of vertices satisfying that if $p\sim q$ and $e_1$ and $e_2$ are edges with the same label and $\iota e_1=\iota e_2=p$, then $e_1\sim e_2$). There are two cases: if no pair of vertices in $\Gamma(H)\leqslant \chi(F_n, H,X)$ become identified then $\Gamma(H)$ is a subgraph of $\Gamma(K)=\core(\chi(F_n, K,X))$ and so, $H\leqslant_{ff} K$; otherwise, we loose $H$ from the picture, but we can still say that some compatible quotient of $\Gamma(H)$ will be visible as a subgraph of $\Gamma(K)$. Since $\Gamma(H)$ is finite, it has finitely many compatible quotients and, therefore, computing all of them and computing free bases for their fundamental groups, we obtain a finite list of finitely generated subgroups $\mathcal{O}_{F_n}(H)=\{ H=H_0, H_1, \ldots ,H_r\}$ ($r$ depending on $H$), called the \emph{fringe of $H$} in~\cite{MVW}, all of them containing $H$ and satisfying the following property: \emph{for every $H\leqslant_{fg} K\leqslant_{fg} F_n$ there exists (a non necessarily unique) $i=0,\ldots ,r$ such that $H\leqslant H_i\leqslant_{ff} K$}.

It only remains to clean this list by checking, for each pair of indices $i,j$, whether $H_i\leqslant_{ff} H_j$ and, in this case, delete $H_j$ from the list. It is not difficult to see that the resulting reduced list is precisely $\alge_{F_n}(H)\subseteq \mathcal{O}_{F_n}(H)$. Uniqueness of the $K$-algebraic closure follows directly from the definition of algebraic extension.
\end{proof}

As an easy corollary, we obtain the following result which immediately proves Theorem~\ref{main1}:

\begin{cor}
For any subgroup $H\leqslant_{fg}F_n$, we have $\dc_{F_n}(H)=\sup_{H\leqslant K\leqslant_{fg} F_n} \{ \rrk(H)/\rrk(K)\}=\max_{K\in \alge_{F_n}(H)} \{ \rrk(H)/\rrk(K)\}$; furthermore, we can effectively compute $\dc_{F_n}(H)$ and a free basis of a subgroup $K$ where the maximum is attained.
\end{cor}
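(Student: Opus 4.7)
My plan is to deduce the corollary directly from Takahasi's theorem (Theorem~\ref{takahasi}): every finitely generated overgroup $H\leqslant K\leqslant_{fg} F_n$ has a canonically associated algebraic extension of $H$ sitting as a free factor of $K$, and so the ratio $\rrk(H)/\rrk(K)$ is optimised by passing from $K$ down to this algebraic extension.

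More concretely, the first equality is just the definition of $\dc_{F_n}(H)$. For the second, the direction $\max \leqslant \sup$ is trivial, since every $K\in \alge_{F_n}(H)$ is a finitely generated subgroup of $F_n$ containing $H$, and so contributes to the supremum. For the reverse direction, I would fix an arbitrary $K\leqslant_{fg} F_n$ with $H\leqslant K$ and apply Theorem~\ref{takahasi} to obtain the $K$-algebraic closure $H_i\in \alge_{F_n}(H)$, which satisfies $H\leqslant_{alg} H_i \leqslant_{ff} K$. Being a free factor of $K$, $H_i$ satisfies $\rrk(H_i)\leqslant \rrk(K)$ (immediate from $K=H_i*L$ and the definition of reduced rank), whence
\[
\frac{\rrk(H)}{\rrk(K)} \;\leqslant\; \frac{\rrk(H)}{\rrk(H_i)} \;\leqslant\; \max_{H'\in \alge_{F_n}(H)} \frac{\rrk(H)}{\rrk(H')}.
\]
Taking the supremum over $K$ closes the chain of inequalities, and as a byproduct shows that the supremum is actually a maximum, attained at some algebraic extension of $H$.

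The effective part will be an immediate consequence of the algorithmic content already packaged in Theorem~\ref{takahasi}: starting from a generating set of $H$, one computes the finite list $\alge_{F_n}(H)$ together with a free basis for each $H_i$; comparing the finitely many ratios $\rrk(H)/\rrk(H_i)$ then yields both the numerical value $\dc_{F_n}(H)$ and an $H_i$ on which the maximum is attained, whose free basis is the required output.

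I do not expect any genuine obstacle, since the whole statement is a short consequence of Takahasi's theorem. The only subtlety worth flagging is the degenerate case where $\rrk(H_i)=0$ for every $H_i\in \alge_{F_n}(H)$ --- which forces $\rrk(H)=0$ as well, since $H\in \alge_{F_n}(H)$ --- so that the $0/0=1$ convention yields $\dc_{F_n}(H)=1$, consistent with the fact that cyclic subgroups of $F_n$ are always compressed.
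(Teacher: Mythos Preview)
Your proposal is correct and follows essentially the same argument as the paper: use Takahasi's theorem to replace an arbitrary overgroup $K$ by its $K$-algebraic closure $H_i\in\alge_{F_n}(H)$, observe $\rrk(H_i)\leqslant\rrk(K)$ from the free-factor relation, and conclude that the supremum is a maximum over the finite computable set $\alge_{F_n}(H)$. Your treatment is slightly more explicit about the trivial inequality and the $0/0$ degenerate case, but the core idea is identical.
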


\begin{proof}
By Theorem~\ref{takahasi}, every $H\leqslant K\leqslant_{fg} F_n$ uniquely determines the $K$-algebraic closure of $H$, i.e., an $H'\in \alge_{F_n}(H)$ such that $H\leqslant_{alg} H'\leqslant_{ff} K$. Since $\rrk(H')\leqslant \rrk(K)$, we can restrict the supremum in the definition of $\dc_{F_n}(H)$ to those subgroups in $\alge_{F_n}(H)$. And, since $|\alge_{F_n}(H)|$ is finite and computable, this supremum is a maximum and we can effectively compute both $\dc_{F_n}(H)$ and a free basis of a subgroup $K$ where the maximum is attained.
\end{proof}

\section{Degree of compression in free-abelian times free groups}\label{fatf-c}

For the rest of the paper we work in \emph{free-abelian times free} groups $G=\Z^m \times F_n$, i.e., direct products of a free-abelian group $\Z^m$ and a free group $F_n$, investigating here the degrees of compression and inertia of subgroups.

Taking a free-abelian basis $\{t_1,\ldots, t_m \}$ for $\Z^m$ (with multiplicative notation), and a free basis $\{ x_1,\ldots, x_n \}$ for $F_n$, we have
 $$
G=\Z^m \times F_n =\langle t_1, \ldots ,t_m, x_1,\ldots ,x_n \mid t_it_j = t_jt_i,\,\, t_ix_k = x_kt_i \rangle,
$$
where $i,j=1,2,\ldots ,m$ and $k=1,2,\ldots, n$. A normal form for elements in $G$ is
 $$
t^{a_1}_1 \cdots t^{a_m}_m u(x_1,\ldots,x_n) =t^{(a_1, \ldots , a_m)}u(x_1,\ldots,x_n),
 $$
where $a=(a_1, \ldots , a_m)\in \Z^m$ is a row integral vector, and $u=u(x_1,\ldots , x_n)$ is a reduced word in $F_n$. Note that the symbol $t$ by itself has no real meaning; it just allows us to convert the ambient notation for the abelian group $\Z^m$ from multiplicative into additive (since $t^a t^b=t^{a+b}$, for $a,b\in \Z^m$).

At a first glance, solving problems in $\Z^m \times F_n$ seems to be reducible to the corresponding problems in $\Z^m$ and $F_n$. However, this is not always the case and many naive looking questions are much more complicated to answer in $\Z^m \times F_n$, rather than in $\Z^m$ and $F_n$. This is the case, for example, with the Howson property: both $\Z^m$ and $F_n$ are Howson but, as we saw above, $G=\Z^m \times F_n$ is not (as soon as $m\geqslant 1$ and $n\geqslant 2$).

Let $\pi\colon G=\Z^m \times F_n \twoheadrightarrow F_n$, $t^au \mapsto u$, be the natural projection. For a subgroup $H\leqslant_{fg}G$, a \emph{basis of $H$} is a set of generators for $H$ of the form $\{ t^{a_1}u_1, t^{a_2}u_2, \ldots, t^{a_r}u_r, t^{b_1}, t^{b_2}, \ldots, t^{b_s} \}$, where $\{ u_1,\ldots ,u_{r}\}$ is a free basis of $H\pi$, $a_i\in \Z^m$ for $i=1,2,\ldots, r$, and $\{ b_1,\ldots ,b_s\}$ is a free-abelian basis for $L_H =H\cap \Z^m$ (to avoid confusions, we will maintain the full names, \emph{free-abelian basis}, \emph{free basis}, and just \emph{basis}, to refer to $\Z^m$, $F_n$, and $G=\Z^m\times F_n$, respectively). According to~\cite[Prop.~1.9]{DeV}, every subgroup $H\leqslant_{fg} G$ admits a basis, computable from any given set of generators. Observe also that a subgroup $H\leqslant G$ is finitely generated if and only if $H\pi\leqslant F_n$ is so.

In this section we study the degree of compression of a given subgroup $H\leqslant_{fg} G$. The first lemma says that it is enough to consider those overgroups $K$ such that $H\pi\leqslant_{alg} K\pi$.

\begin{lemma}\label{2.9}
Let $H{\leqslant}_{fg} G=\Z^m\times F_n$. Then,
 $$
\dc_G (H):=\sup_{H\leqslant K\leqslant_{fg}G} \left\{ \frac{\rrk(H)}{\rrk(K)} \right\}=\max_{\scriptsize \begin{array}{c} H\leqslant K\leqslant_{fg} G \\ H\pi \leqslant_{alg} K\pi \end{array}} \left\{ \frac{\rrk(H)}{\rrk(K)} \right\}.
 $$
\end{lemma}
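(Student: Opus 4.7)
The plan is to show that every overgroup $K$ with $H\leq K\leq_{fg}G$ admits a subgroup $K'\leq K$ that still contains $H$, satisfies the algebraic condition $H\pi \leq_{alg} K'\pi$, and has $\rrk(K')\leq \rrk(K)$. Given such a replacement, $\rrk(H)/\rrk(K) \leq \rrk(H)/\rrk(K')$, so the supremum on the left is at most the one on the right; the reverse inequality is trivial, since the right-hand supremum is taken over a subfamily of $K$'s.

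For the construction, apply Takahasi's theorem (Theorem~\ref{takahasi}) to $H\pi \leq K\pi$ in $F_n$: let $H_0 \in \alge_{F_n}(H\pi)$ be the $K\pi$-algebraic closure of $H\pi$, so $H\pi \leq_{alg} H_0 \leq_{ff} K\pi$, and set $K' := K \cap \pi^{-1}(H_0)$. Then $H\leq K'$ (from $H\leq K$ and $H\pi\leq H_0$), $L_{K'} = L_K$ (since $\Z^m\subseteq \pi^{-1}(H_0)$), and $K'\pi = H_0$: the inclusion $\subseteq$ is by definition, while any $w\in H_0\leq K\pi$ lifts to some $k\in K$ which then lies in $K'$ and projects to $w$. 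Consequently $H\pi\leq_{alg} K'\pi$, so $K'$ is an admissible competitor for the right-hand maximum.

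To compare ranks, use that $H_0$ is a free factor of $K\pi$ to extend a free basis $\{v_1,\dots,v_{p'}\}$ of $H_0$ to a free basis $\{v_1,\dots,v_p\}$ of $K\pi$, and choose a basis of $K$ of the form $\{t^{c_1}v_1,\dots,t^{c_p}v_p,t^{d_1},\dots,t^{d_q}\}$ (possible by~\cite[Prop.~1.9]{DeV}). The subset $\{t^{c_1}v_1,\dots,t^{c_{p'}}v_{p'},t^{d_1},\dots,t^{d_q}\}$ is then a basis of $K'$ in the same sense: its first $p'$ elements project to a free basis of $K'\pi = H_0$, the $t^{d_j}$'s form a free-abelian basis of $L_{K'}=L_K$, and an arbitrary $k\in K'$ can be written as $t^e\tilde w$ with $\tilde w$ a word in $v_1,\dots,v_{p'}$ (since $k\pi \in H_0$ and $H_0$ is a free factor) and the leftover abelian part then absorbed into $\langle t^{d_j}\rangle$. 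Hence $\rk(K') = p'+q \leq p+q = \rk(K)$, giving $\rrk(K')\leq \rrk(K)$ as desired.

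The main technical obstacle is precisely this last rank computation: one must verify that the displayed generating set is a basis in the paper's sense, and handle the degenerate cases where $K\pi$ is trivial or $H_0 = K\pi$ (both of which reduce immediately, with $K' = L_K$ or $K' = K$, respectively). Once this is in place, the two supremums coincide, and the right-hand one is a maximum since the numerator $\rrk(H)$ is fixed and $\rrk(K')$ ranges over nonnegative integers.
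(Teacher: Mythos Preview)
Your proof is correct and follows essentially the same route as the paper's. The only cosmetic difference is in how $K'$ is introduced: the paper defines $K'$ directly as $\langle t^{c_1}v_1,\dots,t^{c_{p'}}v_{p'}, t^{d_1},\dots,t^{d_q}\rangle$ and then checks $H\leq K'$ by an explicit computation, whereas you define $K':=K\cap\pi^{-1}(H_0)$ (making $H\leq K'$ and $L_{K'}=L_K$ automatic) and then identify that same generating set; the underlying verification---that the ``leftover abelian part'' lands in $L_K$---is the same argument in both cases.
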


\begin{proof}
We already observed above that the supremum defining the degree of compression is always a maximum. The inequality $\geqslant$ is clear.

Fix a basis for $H$, say $\{ t^{a_1}u_1, \ldots, t^{a_r}u_r, t^{b_1}, \ldots, t^{b_s} \}$. To see the other inequality, take a subgroup $H\leqslant K\leqslant_{fg}G$ and we shall construct $H\leqslant K'\leqslant_{fg}G$ such that $H\pi\leqslant_{alg} K'\pi$ and $\rrk(H)/\rrk(K)\leqslant \rrk(H)/\rrk(K')$.

We have $L_H=H\cap \Z^m =\langle t^{b_1}, \ldots ,t^{b_s}\rangle\leqslant K\cap \Z^m =L_K$ and $H\pi\leqslant K\pi$ so, $\rk(L_H) \leqslant \rk(L_K)$ and $H\pi \leqslant_{alg} J\leqslant_{ff} K\pi$, for some $J\in \alge_{F_n}(H\pi)$. Take a free basis $\{ v_1, \ldots, v_p\}$ for $J$ and extend it to a free basis $\{ v_1, \ldots, v_p,v_{p+1},\ldots, v_q\}$ for $K\pi$, $p\leqslant q$. Now, consider a basis for $K$ of the form $\{ t^{c_1}v_1, \ldots, t^{c_p}v_p, t^{c_{p+1}}v_{p+1}, \ldots ,t^{c_q}v_q, t^{d_1}, \ldots ,t^{d_{\ell }} \}$, where $c_i\in \Z^m$, $i=1,\ldots ,q$, are certain vectors, and $\{ t^{d_1}, \ldots, t^{d_{\ell }}\}$ is a free-abelian basis for $L_K$.

Let, $K'=\langle t^{c_1}v_1, \ldots, t^{c_p}v_p, t^{d_1}, \ldots, t^{d_{\ell }}\rangle\leqslant_{fg}K\leqslant G$ and we claim that $H\leqslant K'$. In fact, we already know that $t^{b_i}\in L_H\leqslant L_K=L_{K'} =\langle t^{d_1}, \ldots, t^{d_{\ell }}\rangle \leqslant K'$ for $i=1,\ldots ,s$. Now, for $i=1,\ldots ,r$ we see that $t^{a_i} u_i\in K'$: write $u_i$ as a word $u_i=w_i(v_1,\ldots, v_p)$ (unique up to reduction) and compute $w_i(t^{c_1}v_1,\ldots,t^{c_p}v_p)= t^{e_i}w_i(v_1,\ldots,v_p)=t^{e_i}u_i \in K'\leqslant K$, where $e_i=|w_i|_{v_1}c_1+\cdots +|w_i|_{v_p}c_p$. But $t^{a_i}u_i\in H\leqslant K$ so, $t^{e_i-a_i}\in L_K =L_{K'}\leqslant K'$ and hence, $t^{a_i}u_i=(t^{e_i-a_i})^{-1}(t^{e_i}u_i)\in K'$.

So, for every $H\leqslant K\leqslant_{fg} G$ we have found a finitely generated subgroup in between, $H\leqslant K'\leqslant K$, such that $H\pi \leqslant_{alg} J=K'\pi$ and
 $$
\rrk(K') =\rrk(K'\pi) +\rk(L_{K'} )=(p-1) +\rk(L_{K'})\leqslant (q-1)+\rk(L_K)=\rrk(K);
 $$
therefore, $\rrk(H)/\rrk(K') \geqslant \rrk(H)/\rrk(K)$ and the proof is completed.
\end{proof}

Fix $H\leqslant_{fg} G$ together with a basis for it $\{ t^{a_1}u_1, \ldots, t^{a_r}u_r, t^{b_1}, \ldots, t^{b_s}\}$, and consider the matrices
 $$
A=\left(\begin{array}{c} a_1 \\ \vdots \\ a_r \end{array}\right)\in M_{r \times m}(\Z)\quad \text{and} \quad B= \left(\begin{array}{c} b_1 \\ \vdots \\ b_s \end{array}\right )\in M_{s \times m}(\Z).
 $$
For every $J\in \alge_{F_n}(H\pi)$ given with a free basis, say $J=\langle v_1, \ldots, v_p\rangle$, we can consider the (unique reduced) word expressing each $u_i$ in terms of $v_1, \ldots, v_p$, say $u_i =w_i(v_1,\ldots, v_p)$, abelianize, and get the vector $({|w_i|}_{v_1},\ldots ,{|w_i|}_{v_p})\in \Z^p$, $i=1,\ldots ,r$; collecting all of them into the rows of a matrix,
 $$
U_J=\left (\,\begin{array}{ccc}
{|w_1|}_{v_1} & \cdots & |w_1|_{v_p}\\ & \vdots & \\ {|w_r|}_{v_1} & \cdots & |w_r|_{v_p} \end{array}\right) \in M_{r\times p}(\Z).
 $$

According to Lemma~\ref{2.9}, to compute $\dc_G(H)$ it is enough to consider the subgroups of the form $K=\langle t^{c_1}v_1, \ldots, t^{c_p}v_p, L_K\rangle\leqslant_{fg} G$ (where $L_K=K\cap \Z^m$, assume the given set of generators to be a basis for $K$) such that $H\leqslant K\leqslant G$, $H\pi =\langle u_1, \ldots, u_r \rangle \leqslant_{alg} K\pi =\langle v_1, \ldots, v_p \rangle$, compute $\rrk(H)/\rrk(K)$, and take the maximum of these values. (Observe that, although $|\alge_{F_n}(H\pi)|<\infty$, there are, possibly, infinitely many such $K$'s; however, $\rrk(K)=p-1+\rk(L_K)$ takes only finitely many values.)

So, fix such a $K$ and consider the matrix
 $$
C_K=\left( \begin{array}{c} c_1 \\ \vdots \\ c_p \end{array}\right) \in M_{p\times m}(\Z).
 $$
Observe that $C_K$ satisfies $\row(A-U_{K\pi}C_K)\leqslant L_K$: in fact, for every $i=1,\ldots ,r$, we have
 $$
K\ni w_i(t^{c_1}v_1,\ldots, t^{c_p}v_p)=t^{|w_i|_{v_1}c_1+\cdots +|w_i|_{v_p}c_p}w_i(v_1,\ldots, v_p)=t^{(U_{K\pi})_i C_K}u_i,
 $$
where $(U_{K\pi})_i$ is the $i$-th row of $U_{K\pi}$; therefore, $H\leqslant K$ implies that $a_i-(U_{K\pi})_i C_K\in L_K$, for $i=1,\ldots ,r$. This motivates the following definition, which allows us to obtain the main result in this section.

\begin{dfn}
For given matrices $A\in M_{r \times m}(\Z)$, $B\in M_{s \times m}(\Z)$, and $U\in M_{r \times p}(\Z)$, define $d(A,B,U)=\min_{L\leqslant \Z^m} \{ \rk(L) \mid \exists \,\, C\in M_{p\times m}(\Z) \text{ such that } \row(A-UC)\leqslant L, \text{ and } \row(B)\leqslant L \}$.
\end{dfn}

%\begin{thm}\label{dcm}
%For any given $H\leqslant_{fg} G=\Z^m\times F_n$, any basis for it $\{ t^{a_1}u_1, \ldots, t^{a_r}u_r, t^{b_1}, \ldots, t^{b_s} \}$, and using the notation above, we have $\dc_G(H)=\rrk(H)/\min_{J\in \mathcal{AE}_{F_n}(H\pi)} \{\rrk(J)+d(A,B,U_J)\}$. Moreover, $\dc_G(H)$ is algorithmically computable.
%\end{thm}

\begin{proof}[Proof of Theorem~\ref{main-C}]
By Lemma~\ref{2.9}, we know that the supremum in $\dc_G(H)$ is attained at a certain $H\leqslant K\leqslant_{fg}G$ such that $K\pi\in \mathcal{AE}_{F_n}(H\pi)$. And, for every such $K$, $\rrk(K)=\rrk(K\pi)+\rk(L_K)$ so,
 $$
\dc_G(H)=\max_{\scriptsize \begin{array}{c} H\leqslant K\leqslant_{fg} G \\ H\pi \leqslant_{alg} K\pi \end{array}} \left\{ \frac{\rrk(H)}{\rrk(K)} \right\} =\max_{J\in \mathcal{AE}_{F_n}(H\pi)} \left\{ \frac{\rrk(H)}{\rrk(J)+d(A,B,U_J)} \right\} =
 $$
 \begin{equation}\label{aa}
=\frac{\rrk(H)}{\min_{J\in \mathcal{AE}_{F_n}(H\pi)} \{ \rrk(J)+d(A,B,U_J)\}}
 \end{equation}
since, by the argument above, every $K$ with $K\pi=J\in \mathcal{AE}_{F_n}(H\pi)$ satisfies $\rk(L_K)\geqslant d(A,B,U_J)$, one of them with equality.

In order to compute the value of $\dc_G(H)$ we can do the following: first compute $\mathcal{AE}_{F_n}(H\pi)$; for each member $J=\langle v_1,\ldots ,v_p\rangle$, write each $u_i$ in the free basis of $H\pi$ in terms of the free basis $\{ v_1, \ldots ,v_p\}$ of $J$, and obtain the matrix $U_J$; then compute $d(A,B,U_J)+\rrk(J)$ (which is effectively doable by the following Proposition~\ref{2.11}). When this procedure is done for each of the finitely many $J\in \mathcal{AE}_{F_n}(H\pi)$, take the minimum of the values $d(A,B,U_J)+\rrk(J)$ and, by~\eqref{aa}, we are done. Moreover, the elements of the free basis for the subgroup $J$ attaining this minimum, together with the rows of the matrix $C$ just computed and realizing the minimum in $d(A,B,U_J)$, are the ingredients to build a basis for a subgroup $K\leqslant_{fg} G$ attaining the minimum in $\dc_G(H)$.
\end{proof}

%The computability of the numbers $d(A,B,U_J)$ (together with a free-abelian basis of a subgroup $L$ where the minimum is attained) is established in the following proposition:

\begin{prop}\label{2.11}
For any given matrices $A\in M_{r\times m}(\Z)$, $B\in M_{s\times m}(\Z)$, and $U\in M_{r\times p}(\Z)$, the value of $d(A,B,U)$ is algorithmically computable, together with a free-abelian basis of an $L\leqslant \Z^m$ attaining the minimum, and the corresponding matrix $C\in M_{p\times m}(\Z)$.
\end{prop}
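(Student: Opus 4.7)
The plan is to reduce the minimization to a bounded computation via the Smith Normal Form of $U$, and then to a finite search inside a finite abelian quotient of $\Z^m$.

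First I would compute the Smith Normal Form of $U$: integer matrices $W\in GL_r(\Z)$ and $V\in GL_p(\Z)$ with $WUV=D=\diag(d_1,\ldots,d_k,0,\ldots,0)\in M_{r\times p}(\Z)$, where $d_1\mid\cdots\mid d_k$. Left-multiplying the block $A-UC$ by $W$ is a $\Z$-invertible row operation on the stacked matrix $\binom{B}{A-UC}$ (via the block-diagonal $\diag(I_s,W)$), so it preserves the row lattice in $\Z^m$ and hence the rank. Since $C\mapsto VC$ is a bijection of $M_{p\times m}(\Z)$, we may rename $A\leftarrow WA$, $C\leftarrow VC$, $U\leftarrow D$ and assume $U=D$. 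Then the $i$-th row of $A-DC$ is $a_i-d_ic_i$ for $i\leqslant k$, and the fixed vector $a_i$ for $i>k$. Absorbing these latter rows into $L_0:=\langle\row B,a_{k+1},\ldots,a_r\rangle_\Z\leqslant\Z^m$, and noting that ``$\exists c_i\in\Z^m$ with $a_i-d_ic_i\in L$'' is equivalent to ``$a_i\in L+d_i\Z^m$'', the definition becomes
 $$
 d(A,B,U)=\min\bigl\{\rk(L) : L_0\leqslant L\leqslant\Z^m,\ a_i\in L+d_i\Z^m \text{ for all } i=1,\ldots,k\bigr\}.
 $$

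Both conditions on $L$ are monotone, so some optimal $L$ is saturated in $\Z^m$, hence a direct summand, and contains the saturation $L_0^{\text{sat}}$ of $L_0$. Passing to the torsion-free quotient $\Lambda:=\Z^m/L_0^{\text{sat}}\cong\Z^{m-\rk(L_0)}$ and writing $\bar a_i$ for the image of $a_i$, the problem reduces to finding a direct summand $\bar L\leqslant\Lambda$ of minimal rank with $\bar a_i\in\bar L+d_i\Lambda$ for every $i$; then $d(A,B,U)=\rk(L_0)+\rk(\bar L)$. Setting $d:=\operatorname{lcm}(d_1,\ldots,d_k)$, the coset conditions depend on $\bar L$ only through its image $\tilde L$ in the finite group $\Gamma:=\Lambda/d\Lambda\cong(\Z/d)^{m-\rk(L_0)}$ (because $d_i\mid d$). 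Moreover, rank-$t$ direct summands of $\Lambda$ project onto the $(\Z/d)$-free rank-$t$ direct summands of $\Gamma$: given such a $\tilde L$, a $(\Z/d)$-basis can be lifted to $\Lambda$ and then adjusted (by addition of $d$-multiples, chosen so that the $\gcd$ of the $t\times t$ minors becomes $1$) to produce a rank-$t$ direct summand of $\Lambda$ mapping onto $\tilde L$.

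Therefore the algorithm proceeds as follows: enumerate the finitely many $(\Z/d)$-free direct summands of $\Gamma$ of each rank $t=0,1,\ldots,m-\rk(L_0)$; for each, test the finitely many coset conditions $\bar a_i\in\tilde L+d_i\Gamma$ inside $\Gamma$; and select a valid one of minimum rank $t$, giving $d(A,B,U)=\rk(L_0)+t$. From the chosen $\tilde L$, reconstruct a direct summand $\bar L\leqslant\Lambda$ (via Hermite normal form lifting), pull back to $L\leqslant\Z^m$ containing $L_0$, and read off a free-abelian basis of it. Finally, recover the matrix $C$ row by row by solving $d_ic_i=a_i-x_i$ for a chosen $x_i\in L\cap(a_i+d_i\Z^m)$, which is solvable by construction. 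The main technical obstacle I foresee is making the lifting step from $\Gamma$ to $\Lambda$ constructive and verifying the surjectivity claimed above; this reduces to standard computations in integer linear algebra (Smith and Hermite normal forms combined with elementary divisor arguments).
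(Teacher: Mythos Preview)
Your plan is correct and follows the same line as the paper's proof: reduce via the Smith normal form of $U$, fold the rows of $A$ corresponding to zero diagonal entries into the forced part $L_0$, note that the remaining constraints $a_i\in L+d_i\Z^m$ depend on $L$ only modulo $d=d_k$, carry out a finite search in $(\Z/d\Z)^m$, and lift an optimal solution back to $\Z^m$. The paper organises the finite search by enumerating, for each $i$, the $(d/d_i)^m$ lifts of $a_i\bmod d_i$ to $(\Z/d\Z)^m$ and minimising the rank of their span together with $L_0\bmod d$, rather than enumerating candidate summands as you do, but this is a cosmetic difference; the lifting step you single out amounts to the surjectivity of $\GL_n(\Z)\to\GL_n(\Z/d\Z)$, which is standard.
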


\begin{proof}
Recall that $d(A,B,U)$ is the minimum rank of those subgroups $L\leqslant \Z^m$ satisfying $\row(B)\leqslant L$, and $\row(A-UC) \leqslant L$ for some $C\in M_{p\times m}(\Z)$. Observe first that, replacing $B$ by $B'$ with $\row(B)\leqslant_{fi} \row(B') \leqslant_{\oplus}\Z^m$, we have $d(A,B',U)=d(A,B,U)$; in fact, $d(A,B',U)\geqslant d(A,B,U)$ is clear from the definition, and for every $L\leqslant \Z^m$ containing $\row(B)$ and $\row(A-UC)$ for some $C\in M_{p\times m}(\Z)$, we have the subgroup $L+\row(B')\leqslant \Z^m$ which contains $\row(B')$ and $\row(A-UC)$ for the same matrix $C$, and has the same rank, $\rk(L+\row(B'))=\rk(L)$, since $L\leqslant_{fi} L+\row(B')$; this proves the equality.

Let us do a few reductions to the problem. Compute matrices $P\in \GL_r(\Z)$, $Q\in \GL_p(\Z)$, and positive integers $d_1,\ldots ,d_{\ell} \in \N$, $\ell \leqslant \min\{r,p\}$, satisfying $1\leqslant d_1 |d_2 |\cdots |d_{\ell}\neq 0$, such that $PUQ=U'$,  where $U'=\diag(d_1, \ldots ,d_{\ell})\in M_{r\times p}(\Z)$ (understanding the last $r-\ell\geqslant 0$ rows and the last $p-\ell\geqslant 0$ columns full of zeros); this is the Smith normal form of $U$, see~\cite{artin} for details. Writing $A'=PA$, $B'=B$, and doing the change of variable $C=QC'$, we have $\row(A-UC)= \row(PA-PUQC')=\row(A'-U'C')$. So, $d(A,B,U)=d(A',B',U')$.

To compute $d(A',B',U')$, we have to find a subgroup $L\leqslant \Z^m$ of the minimum possible rank, and vectors $c'_1, \ldots ,c'_p\in \Z^m$, such that $\row(B')\leqslant L$,
 \begin{equation}\label{cond1}
\left. \begin{array}{c}
a'_1-d_1 c'_1\in L \\ \cdots \\ a'_{\ell}-d_{\ell}c'_{\ell}\in L
\end{array} \right\},
\end{equation}
and
\begin{equation}\label{cond2}
\left. \begin{array}{c}
a'_{\ell+1}\in L \\ \cdots \\ a'_{r}\in L
\end{array} \right\}.
\end{equation}
Note that the last $p-\ell\geqslant 0$ columns of $U'$ are full of zeroes so, no condition concerns the vectors $c'_{\ell+1}, \ldots ,c'_p$ and we can take them to be arbitrary (say zero, for example). That is, taking $c'_{\ell+1}=\cdots =c'_p=0$, denoting $A''=A'\in M_{r\times m}(\Z)$, $B''=B'\in M_{s\times m}(\Z)$, $U''\in M_{r\times \ell}(\Z)$ the matrix $U'$ after deleting the last $p-\ell\geqslant 0$ columns (and $C''\in M_{\ell\times m}(\Z)$ the matrix $C'$ after deleting the last $p-\ell\geqslant 0$ rows), we have $d(A',B',U')=d(A'',B'',U'')$.

Now, we can ignore conditions~\eqref{cond2} by adding the vectors $a''_{\ell+1}, \ldots ,a''_r$ as extra rows at the bottom of $B$: let $A'''\in M_{\ell\times m}(\Z)$ be $A''$ after deleting the last $r-\ell\geqslant 0$ rows, $B'''\in M_{(s+r-\ell)\times m}(\Z)$ be $B''$ enlarged with $r-\ell$ extra rows with the vectors $a''_{\ell+1}, \ldots ,a''_r$, $U'''\in M_{\ell \times \ell}(\Z)$ be the matrix $U''$ after deleting the last $r-\ell\geqslant 0$ rows (and $C'''=C''$), and we have that $d(A'',B'',U'')=d(A''',B''',U''')$. Note that now, $U'''=\diag(d_1, \ldots ,d_{\ell})$ is a square matrix.

Finally, if $d_1=1$ we can take $c'_1=a'_1$ and the first condition in~\eqref{cond1} becomes trivial; so, deleting the possible ones at the beginning of the list $d_1|d_2|\cdots |d_{\ell}$ (and their rows and columns from $U'''$), and deleting also the corresponding first rows of $A$ and $C$, we can assume $d_1\neq 1$.

Altogether, and resetting the notation to the original one, we are reduced to compute $d(A,B,U)$ in the special situation where $A\in M_{r\times m}(\Z)$, $B\in M_{s\times m}(\Z)$, and $U=\diag(d_1, \ldots , d_{r})\in M_{r\times r}(\Z)$, with $1\neq d_1|d_2|\cdots |d_r\neq 0$, and further, by the argument in the first paragraph of the present proof, with $\row(B)$ being a direct summand of $\Z^m$. That is, we have to compute a subgroup $L\leqslant \Z^m$ of the minimum possible rank, and vectors $c_1, \ldots ,c_p\in \Z^m$ satisfying $\row(B)\leqslant L$ and
\begin{equation}\label{cond3}
\left. \begin{array}{c}
a_1-d_1 c_1\in L \\ \cdots \\ a_r-d_rc_r\in L
\end{array} \right\},
\end{equation}
where $a_i$ is the $i$-th row of $A$. Let us think the conditions in~\eqref{cond3} as saying that $a_i\in L$ modulo $d_i\Z^m$, $i=1,\ldots ,r$. To solve this, let us start with $L_0=\row(B)\leqslant_{\oplus} \Z^m$ and let us increase it the minimum possible in order to fulfill conditions~\eqref{cond3}.

Since $d_1 |d_2 |\cdots |d_r$, the natural projections $\pi_i \colon \Z^m \twoheadrightarrow (\Z/d_i\Z)^m$ factorize through the chain of morphisms $\Z^m \twoheadrightarrow (\Z/d_r\Z)^m \twoheadrightarrow (\Z/d_{r-1}\Z)^m \twoheadrightarrow \cdots \twoheadrightarrow (\Z/d_1\Z)^m$. Starting with $L\geqslant L_0$ and collecting the last condition in~\eqref{cond3}, we deduce that $L$ must further satisfy $L\pi_r\geqslant L_0\pi_r+\langle v^0_r\pi_r\rangle$, where $v^0_r= a_r\in \Z^m$. Now the second condition from below in~\eqref{cond3} adds the requirement $L\pi_{r-1}\ni a_{r-1}\pi_{r-1}$. But $a_{r-1}\pi_{r-1}\in (\Z/d_{r-1}\Z)^m$ has finitely many (more precisely, $(d_r/d_{r-1})^m$) pre-images in $(\Z/d_r\Z)^m$; compute them all, take pre-images $v_{r-1}$ up in $\Z^m$, and we get that $L$ must further satisfy $L\pi_r\geqslant L_0\pi_r+\langle v^0_r\pi_r,\, v_{r-1}\pi_r\rangle$, where $v_{r-1}\pi_r$ is one of these $(d_r/d_{r-1})^m$ pre-images. Repeat this same argument with all the conditions in~\eqref{cond3}, working from bottom to top: we deduce that $L$ must further satisfy $L\pi_r\geqslant L_0\pi_r+\langle v^0_r\pi_r,\, v_{r-1}\pi_r,\ldots , v_1\pi_r\rangle$, where $v_i\in \Z^m$ is a vector such that $v_i\pi_r$ is one of the computed $(d_r/d_i)^m$ pre-images of $a_i\pi_i\in (\Z/d_i\Z)^m$ up in $(\Z/d_r\Z)^m$, $i=r-1, \ldots ,1$, i.e., $v_i\equiv a_i \mod d_i$. This makes a total of $(d_r/d_{r-1})^m \cdots (d_r/d_1)^m$ possible lower bounds for $L\pi_r$: compute them all, find one with minimal possible rank, say $L\pi_r\geqslant L_0\pi_r+\langle v^0_r\pi_r,\, v^0_{r-1}\pi_r,\ldots , v^0_1\pi_r\rangle$, and we deduce that $d(A,U,B)\geqslant \rk(L_1\pi_r)$, where $L_1=L_0+\langle v^0_r,\, v^0_{r-1},\ldots , v^0_1\rangle\leqslant \Z^m$.

We claim that this lower bound is tight, i.e., $d(A,B,U)=\rk(L_1 \pi_r)$. To see this, we have to construct a subgroup $L\leqslant \Z^m$ of rank exactly $\rk(L_1\pi_r)$, containing $L_0$ and satisfying~\eqref{cond3} for some vectors $c_1,\ldots ,c_r\in \Z^m$ (which must also be computed). Since $L_0$ is a direct summand of $\Z^m$, say with free-abelian basis $\{w_1, \ldots ,w_k\}$, we deduce that $L_0\pi_r$ is a direct summand of $(\Z/d_r\Z)^m$ with abelian basis $\{w_1\pi_r, \ldots ,w_k\pi_r\}$. So, $L_0\pi_r$ is also a direct summand of $L_1\pi_r\leqslant (\Z/d_r\Z)^m$; compute a complement and get vectors $v'_1, \ldots ,v'_l\in \Z^m$, $l\leqslant r$, such that $\{w_1\pi_r, \ldots ,w_k\pi_r, v'_1\pi_r, \ldots ,v'_l\pi_r\}$ is an abelian basis of $L_1\pi_r =L_0\pi_r \oplus V$; in particular, $\rk(L_1\pi_r)=k+l$.

Finally, take $L=\langle w_1, \ldots ,w_k, v'_1, \ldots ,v'_l\rangle\leqslant \Z^m$. This subgroup has the desired rank $\rk(L)=k+l=\rk(L_1\pi_r)$ (since the given generators are linearly independent because their $\pi_r$-projections are so), and satisfies the required conditions: on one hand, $L_0=\langle w_1, \ldots ,w_k\rangle\leqslant L$; on the other hand, for every $i=1,\ldots ,r$, $v^0_i\pi_r\in L_1\pi_r=\langle w_1\pi_r, \ldots ,w_k\pi_r\rangle \oplus \langle v'_1\pi_r, \ldots ,v'_l\pi_r\rangle$ so,
 \begin{align*}
v^0_i\pi_r & = \lambda_1(w_1\pi_r)+\cdots +\lambda_k(w_k\pi_r) +\mu_1(v'_1\pi_r) +\cdots +\mu_l(v'_l\pi_r) \\ & = (\lambda_1 w_1+\cdots +\lambda_k w_k +\mu_1 v'_1 +\cdots +\mu_l v'_l )\pi_r,
 \end{align*}
for some integers $\lambda_1, \ldots ,\lambda_k, \mu_1, \ldots ,\mu_l\in \Z$; thus, $L$ contains the vector $c_i=\lambda_1 w_1+\cdots +\lambda_k w_k +\mu_1 v'_1 +\cdots +\mu_l v'_l$ which satisfies $c_i\equiv v^0_i \mod d_r$ and so, $c_i\equiv v^0_i \mod d_i$ too; since $v^0_i \equiv a_i \mod d_i$, we deduce $c_i\equiv a_i \mod d_i$ and we are done.
\end{proof}

It is natural to ask whether the minimum $\min_{J\in \mathcal{AE}_{F_n}(H\pi)} \{\rrk(J)+d(A,B,U_J)\}$ in Theorem~\ref{main-C} is attained at an algebraic extension $J\in \mathcal{AE}_{F_n}(H\pi)$ of minimal rank. Unfortunately, this is not always the case, as shown in the following example. In order to compute $\dc_G(H)$, this fact forces us to run \emph{over all} algebraic extensions $J$ of $H\pi$, and compute $d(A,B,U_J)$ following the algorithm given in Proposition~\ref{2.11}, \emph{for each one}. We do not see any shortcut to this procedure, for the general case.

\begin{ex}
We exhibit an explicit example of a subgroup $H\leqslant_{fg}G$ having two algebraic extensions $J,J'\in \mathcal{AE}_{F_n}(H\pi)$ with $\rrk(J)<\rrk(J')$ but $\rrk(J)+d(A,B,U_J)> \rrk(J')+d(A,B,U_{J'})$.

Let $H=\langle t^{(-1,0)}b^2, t^{(1,0)}ac^{-1}ac^{-1}, t^{(0,1)}bac^{-1} \rangle \leqslant_{fg}G=\Z^2\times F_3$. Projecting, we have $H\pi=\langle b^2,ac^{-1}ac^{-1},bac^{-1}\rangle$, and Fig.~\ref{f1} represents the Stallings' graph $\Gamma_A(H\pi)$ for $H\pi$ as a subgroup of $F_3$ with respect to the ambient free basis $A=\{a,b,c\}$. Successively identifying pairs of vertices of $\Gamma_A(H\pi)$ and reducing the resulting $A$-labeled graph in all possible ways, one concludes that $\Gamma_A(H\pi)$ has nine congruences, whose corresponding quotient graphs are depicted in Figs.~\ref{f1} and~\ref{f2}; this is the fringe of $H\pi$, $\mathcal{O}_{F_n}(H\pi)$; see the proof of Theorem~\ref{takahasi} above.
 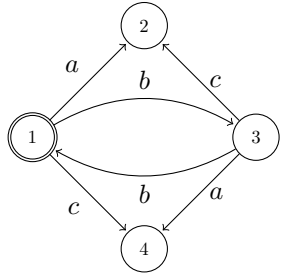
\begin{figure}
\begin{tikzpicture}[shorten >=1pt,node distance=3cm,auto]
\node[state,accepting, scale=0.70] (1) {$1$};
 \node[state, scale=0.70] (2) [above right of = 1] {$2$};
 \node[state, scale=0.70] (4) [below right of = 1] {$4$};
 \node[state, scale=0.70](3) [below right of = 2] {$3$};
 \path[->] (1) edge node {$a$} (2)
 edge node [swap] {$c$} (4)
 (3) edge node [right] {$c$} (2)
(3) edge node [right] [swap] {$a$}  (4);
 \path[->] (3) edge [bend left] node {$b$} (1);
 \path[->] (1) edge [bend left] node {$b$} (3);
\end{tikzpicture}
\caption{The Stallings' graph $\Gamma(H\pi)$}\label{f1}
\end{figure}

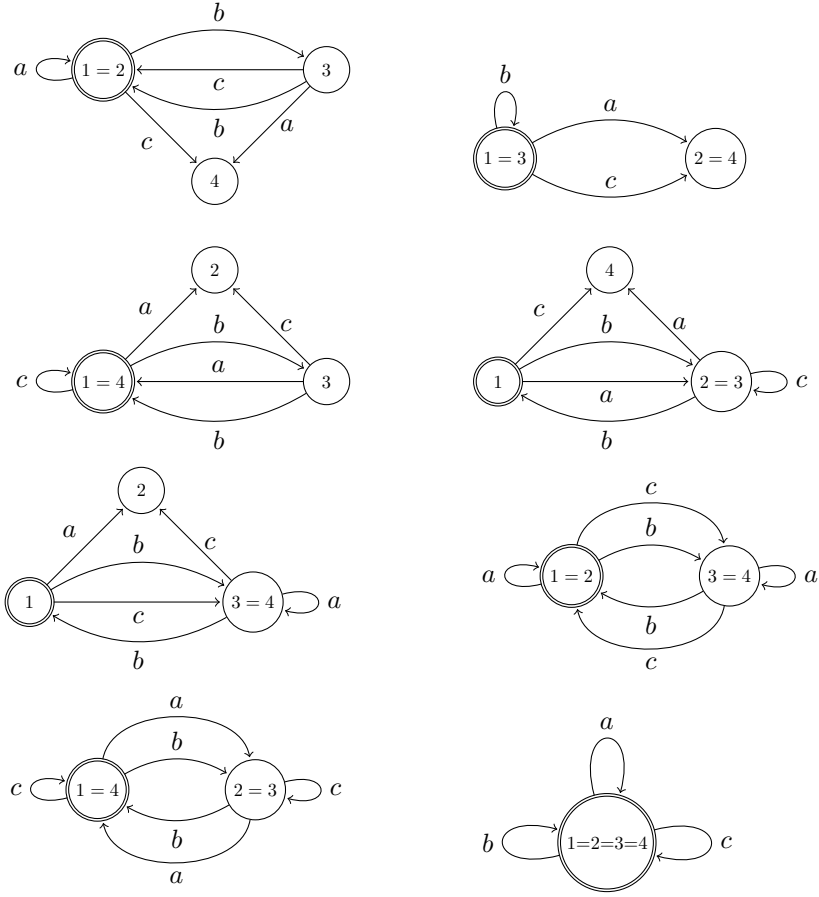
\begin{figure}
 $$
\begin{array}{rcl}
% Picture for 1=2
\begin{tikzpicture}[shorten >=1pt,node distance=3cm,auto]
\node[state,accepting, scale=0.70 ] (1) {$1=2$};
\node[state, scale=0.70] (4) [below right of = 1] {$4$};
\node[state, scale=0.70](3) [ below right of = 2] {$3$};
\path[->] (1) edge node [swap] {$c$} (4)
edge [loop left] node {$a$} ()
(3) edge node {$c$} (1)
(3) edge node [right] [swap] {$a$}  (4);
\path[->] (3) edge [bend left] node {$b$} (1);
\path[->] (1) edge [bend left] node {$b$} (3);
\end{tikzpicture}
& \phantom{aaaaa} &
% Picture for 1=3 and 2=4
\begin{tikzpicture}[shorten >=1pt,node distance=4cm,auto]
\node[state,accepting, scale=0.70] (1) {$1=3$};
\node[state, scale=0.70] (2) [right of = 1] {$2=4$};
\path [->] (1) edge [loop above] node {$b$} ();
\path[->] (1) edge [bend left] node {$a$} (2);
\path[->] (1) edge [bend right] node {$c$} (2);
\end{tikzpicture}
\\ & \\
% Picture for 1=4
\begin{tikzpicture}[shorten >=1pt,node distance=3cm,auto]
\node[state,accepting, scale=0.70] (1) {$1=4$};
\node[state, scale=0.70] (2) [above right of = 1] {$2$};
\node[state, scale=0.70](3) [below right of = 2] {$3$};
\path[->] (1) edge node {$a$} (2)
edge [loop left] node {$c$} ()
(3) edge node [right] {$c$} (2)
(3) edge node [swap] {$a$}  (1);
\path[->] (3) edge [bend left] node {$b$} (1);
\path[->] (1) edge [bend left] node {$b$} (3);
\end{tikzpicture}
& \phantom{aaaaa}  &
%  Picture for 2=3
\begin{tikzpicture}[shorten >=1pt,node distance=3cm,auto]
\node[state,accepting, scale=0.70] (1) {$1$};
\node[state, scale=0.70] (2) [above right of = 1] {$4$};
\node[state,scale=0.70](3) [below right of = 2] {$2=3$};
\path[->] (1) edge node {$c$} (2)
(3) edge node [right] {$a$} (2)
(1) edge node [swap] {$a$}  (3);
\path[->] (3) edge [bend left] node {$b$} (1)
edge [loop right] node {$c$} ();
\path[->] (1) edge [bend left] node {$b$} (3);
 \end{tikzpicture}
\\
% Picture for 3=4
\begin{tikzpicture}[shorten >=1pt,node distance=3cm,auto]
\node[state,accepting, scale=0.70] (1) {$1$};
\node[state, scale=0.70] (2) [above right of = 1] {$2$};
\node[state,scale=0.70](3) [below right of = 2] {$3=4$};
\path[->] (1) edge node {$a$} (2)
(3) edge node [right] {$c$} (2)
(1) edge node [swap] {$c$}  (3);
\path[->] (3) edge [bend left] node {$b$} (1)
edge [loop right] node {$a$} ();
\path[->] (1) edge [bend left] node {$b$} (3);
 \end{tikzpicture}
& \phantom{aaaaa}  &
%picture for 1=2, 3=4
\begin{tikzpicture}[shorten >=1pt,node distance=3cm,auto]
\node[state,accepting, scale=0.70] (1) {$1=2$};
\node[state, scale=0.70] (2) [ right of = 1] {$3=4$};
%\node[state,scale=0.70](3) [below right of = 2] {$3=4$};
\path[->] (1) edge [bend left=80] node {$c$} (2)
(2) edge [bend left=80] node {$c$} (1);
%(1) edge node [bend right] {$c$}  (2);
\path[->] (1) edge [bend left] node {$b$} (2)
edge [loop left] node {$a$} ();
\path[->] (2) edge [bend left] node {$b$} (1)
edge [loop right] node {$a$} ();
 \end{tikzpicture}
\\
%picture for 1=4, 2=3
\begin{tikzpicture}[shorten >=1pt,node distance=3cm,auto]
\node[state,accepting, scale=0.70] (1) {$1=4$};
\node[state, scale=0.70] (2) [ right of = 1] {$2=3$};
%\node[state,scale=0.70](3) [below right of = 2] {$3=4$};
\path[->] (1) edge [bend left=80] node {$a$} (2)
(2) edge [bend left=80] node {$a$} (1);
%(1) edge node [bend right] {$c$}  (2);
\path[->] (1) edge [bend left] node {$b$} (2)
edge [loop left] node {$c$} ();
\path[->] (2) edge [bend left] node {$b$} (1)
edge [loop right] node {$c$} ();
 \end{tikzpicture}
 & \phantom{aaaaa}  &
% Picture for 1=2=3=4
\begin{tikzpicture}[shorten >=1pt,node distance=3cm,auto]
\node[state,accepting, scale=0.70] (1) {1=2=3=4};
\path [->] (1) edge [loop above] node {$a$} ();
\path [->] (1) edge [loop left] node {$b$} ();
\path [->] (1) edge [loop right] node {$c$} ();
\end{tikzpicture}
\end{array}
 $$
\caption{The eight non-trivial quotients of $\Gamma_A(H\pi)$}\label{f2}
\end{figure}

Now following the cleaning process, we get the set of algebraic extensions for $H\pi$, namely $\alge(H\pi)=\{H\pi, J\}$, where $J=\langle b, ac^{-1}\rangle\}$. (To this goal, the following fact helps: suppose $N$ is obtained from $M$ by a single identification of a pair of vertices followed by foldings; if $\rk(N)=\rk(M)+1$ then $M$ is a free factor of $N$, otherwise, $M\leqslant_{alg} N$.)

Following the notation above, we have
 $$
A=\left( \begin{array}{cc} -1 & 0 \\ 1 & 0 \\ 0 & 1 \end{array} \right), \qquad B=\emptyset, \qquad U_{H\pi}=\left( \begin{array}{ccc} 1 & 0 & 0 \\ 0 & 1 & 0 \\ 0 & 0 & 1 \end{array}\right), \quad U_{J}= \left( \begin{array}{cc} 2 & 0 \\ 0 & 2 \\ 1 & 1 \end{array}\right).
 $$
According to Theorem~\ref{main-C},
 \begin{equation}\label{min}
dc_G(H)=\rrk(H)/\min\{\rrk(H\pi)+d(A,B,U_{H\pi}),\, \rrk(J)+ d(A,B,U_J)\}.
 \end{equation}
Since $H\leqslant H$, $d(A,B,U_{H\pi})=\rk(L_H)=0$ and the first term on the minimum in~\eqref{min} is $\rrk(H\pi)+ d(A,B,U_{H\pi}) =(3-1)+0 =2$.

Following the algorithm given in Proposition~\ref{2.11}, let us compute now $d(A,B,U_J)$, where $J=\langle b, ac^{-1}\rangle$; we have $r=3$, $m=2$, $s=0$, and $p=2$. Computing the Smith normal form for $U_J$, we get
 $$
P=\left(\begin{array}{ccc} 0 & 0 & 1 \\ 0 & 1 & 0 \\ 1 & 1 & -2 \end{array}\right)\in \GL_3(\Z), \qquad Q=\left(\begin{array}{cc} 1 & -1 \\ 0 & 1 \end{array}\right)\in \GL_2(\Z), \qquad U'=\left( \begin{array}{cc} 1 & 0 \\ 0 & 2 \\ 0 & 0 \end{array}\right),
 $$
with $d_1=1$, $d_2=2$, and $\ell=\min\{r, p\}=2$. Diagonalyzing the problem, we obtain
 $$
A'=PA=\left(\begin{array}{cc} 0 & 1 \\ 1 & 0 \\ 0 & -2 \end{array} \right), \qquad B'=B=\emptyset, \qquad U'=\left(\begin{array}{cc} 1 & 0 \\ 0 & 2 \\ 0 & 0 \end{array}\right),
 $$
and $d(A,B,U_J)=d(A',B',U')$ (under the change of variable $C=QC'$). Since $p=\ell=2$ the next reduction is empty and $A''=A'$, $B''=B'$, and $U''=U'$. Applying the following reduction to delete the last $r-\ell=3-2=1$ zero rows in $U''$, we get
 $$
A'''=\left(\begin{array}{cc} 0 & 1 \\ 1 & 0 \end{array}\right), \qquad
B'''=\left(\begin{array}{cc} 0 & -2 \end{array}\right), \qquad U'''= \left(\begin{array}{cc} 1 & 0 \\ 0 & 2 \end{array}\right).
 $$
Finally, in order to delete $d_1=1$ from the list of divisors, we take $c'''_1=(0,1)$ and get
 $$
A''''=\left(\begin{array}{cc} 1 & 0 \end{array}\right), \qquad B''''= \left(\begin{array}{cc} 0 & -2 \end{array}\right), \qquad U''''= \left(\begin{array}{c} 2 \end{array}\right).
 $$
Going up by finite index, we replace the matrix $B''''$ to $(0,1)$, and are reduced to compute $d(A'''', (0,1), U'''')$; this is the smallest rank of a subgroup $L\leqslant \Z^2$ such that $\langle (0,1)\rangle\leqslant L$ and $(1,0)-2c''''_2\in L$ for some $c''''_2\in \Z^2$. Clearly, $d(A'''', (0,1), U'''')=2$, and one (non unique) solution is given by $L=\Z^2$ and $c''''_2=(1,0)$. Collecting the $c_1$ computed before, and undoing the change of variable, we get
 $$
C=QC'=QC'''=\left(\begin{array}{cc} 1 & -1 \\ 0 & 1 \end{array}\right) \left(\begin{array}{cc} 0 & 1 \\ 1 & 0 \end{array}\right) =\left(\begin{array}{cc} -1 & 1 \\ 1 & 0 \end{array}\right).
 $$
We conclude that $d(A,B,U_J)=2$ and one of the subgroups $K$ with the smallest possible rank satisfying $K\pi=J$ and $H\leqslant K\leqslant \Z^2\times F_3$ is $K=\langle t^{(-1,1)}b,\, t^{(1,0)}ac^{-1},\, t^{(1,0)},\, t^{(0,1)}\rangle$. So, the second term on the minimum in~\eqref{min} is $\rrk(J)+d(A,B,U_J)=(2-1)+2=3$. Therefore,
 \begin{align*}
dc_G(H)&=\frac{\rrk(H)}{\min\{\rrk(H\pi)+d(A,B,U_{H\pi}),\,\, \rrk(J)+d(A,B,U_J)\}} \\ &=\frac{3-1}{\min\{(3-1)+0,\, (2-1)+2)\}} \\ &=\frac{2}{2}=1.
 \end{align*}
In particular, $H$ is compressed in $G$.

As seen in this example, the algebraic extension $J$ looks better than the other one $H\pi$ because it contributes to the free rank in 2 units instead of 3. However, in order to match the free-abelian part, $J$ forces us to take two more units of rank, while $H\pi$ requires zero units. Note that in this example, $d(A,B,U_J)$ is as big as it could be since, in general, $d(A,B,U_J)\leqslant m=2$. The example can easily be extended to an arbitrary $m$.
\end{ex}

\section{Degree of inertia in free-abelian times free groups}\label{fatf-i}

In this section, we study the degree of inertia for subgroups $H$ of $G=\Z^m\times F_n$ and relate it to the corresponding degree of inertia of $H\pi$ in $F_n$; it turns out that the index of $H\cap \Z^m$ in $\Z^m$ (whether finite or infinite) is closely related to the degree of inertia of $H$. Unfortunately, the situation here is more complicated and we can only prove an upper bound for $\di_G(\cdot)$ in terms of $\di_{F_n}(\cdot)$ and the mentioned index; the computability of this function remains open, as in the free case.

\begin{lemma}\label{BadBoy}
For positive real numbers $a,b,c,d>0$,
 $$
\frac{a}{b}\leqslant \frac{c}{d} \,\, \Rightarrow \,\, \frac{a}{b}\leqslant \frac{a+c}{b+d}\leqslant \frac{c}{d}. \qed
 $$
\end{lemma}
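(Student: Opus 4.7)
The statement is the classical mediant inequality for positive reals, so the plan is entirely elementary and the proof will be short. The hypothesis $a/b \leqslant c/d$ with $b,d>0$ is equivalent to the cross-multiplied form $ad \leqslant bc$, and the plan is to derive both desired inequalities from this single algebraic fact.

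First I would rewrite the hypothesis as $ad \leqslant bc$. For the left inequality $a/b \leqslant (a+c)/(b+d)$, since $b,b+d>0$ I would cross-multiply to get the equivalent statement $a(b+d) \leqslant b(a+c)$, expand to $ab+ad \leqslant ab+bc$, and cancel $ab$ to reduce it to $ad \leqslant bc$, which is exactly the hypothesis. For the right inequality $(a+c)/(b+d) \leqslant c/d$, I would similarly cross-multiply by the positive quantity $d(b+d)$ to get $d(a+c) \leqslant c(b+d)$, expand to $ad+cd \leqslant bc+cd$, cancel $cd$, and again obtain $ad \leqslant bc$.

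There is essentially no obstacle here; the only thing to be careful about is tracking that all denominators $b$, $d$, and $b+d$ are strictly positive (guaranteed by the hypothesis $b,d>0$) so that cross-multiplication preserves the direction of the inequalities. The statement is marked with \texttt{\textbackslash qed} at the end, suggesting the authors intend it to be treated as immediate, so the plan is simply to record this one-line computation (or leave it to the reader, as the \texttt{\textbackslash qed} indicates).
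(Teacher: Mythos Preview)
Your proposal is correct and matches the paper's treatment: the lemma is stated with a trailing \texttt{\qedsymbol} and no proof, so the authors indeed regard it as immediate, and your cross-multiplication argument is exactly the standard one-line verification they are implicitly invoking.
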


\begin{proof}[Proof of Theorem~\ref{mainI}]
\emph{(i)}. The hypothesis $\rk(H\pi)\leqslant 1$ implies that $H=\langle t^a u, L_H\rangle$, for some $a\in \Z^m$ and $u\in F_n$ (possibly trivial). Then, for every $K\leqslant_{fg} G$, we have $(H\cap K)\pi\leqslant H\pi\cap K\pi\leqslant \langle u\rangle$ so, $(H\cap K)\pi =\langle u^r\rangle$ for some $r\in \Z$. Hence, $H\cap K=\langle t^bu^r, L_H\cap L_K\rangle$ for some $b\in \Z^m$ and we get  $\rk(H\cap K)\leqslant \rk(K)$. Therefore, $\rrk(H\cap K)/\rrk(K)\leqslant 1$, which is valid for every $K\leqslant_{fg} G$. Thus, $\di_G(H)=1$ (i.e., $H$ is inert in $G$).

\emph{(ii).} Consider the (unique) subgroup $\tilde{L}_H$ satisfying $L_H{\leqslant}_{fi} \tilde{L}_H{\leqslant}_{\oplus}\Z^m$, and take a free-abelian basis $\{ b_1, \ldots ,b_s\}$ of $\tilde{L}_H$, such that $\{ \lambda_1b_1,\ldots, \lambda_sb_s\}$ is a free-abelian basis of $L_H$ for appropriate integers $\lambda_1,\ldots, \lambda_s \in \Z$ (there is always a basis like this by standard linear algebra arguments). By hypothesis, $s=\rk(L_H)<m$ and, completing to a free-abelian basis $\{ b_1, \ldots ,b_s, b_{s+1}, \ldots ,b_m\}$ of the ambient $\Z^m$, we get at least one extra vector $b_{s+1}$ (which, of course, is primitive in $\Z^m$ and so has relatively prime coordinates).

Now fix a basis for $H$ of the form $\{ t^{a_1}u_1,\ldots,t^{a_{n_1}}u_{n_1}, t^{\lambda_1 b_1}, \ldots ,t^{\lambda_s b_s}\}$, where $a_1,\ldots,a_{n_1}\in \Z^m$, and $\{u_1,\ldots, u_{n_1}\}$ is a free basis for $H\pi$; in particular, we have $\rk(H\pi)=n_1\geqslant 2$, $\rk(L_H)=s<m$, and $\rk(H)=n_1+s$.

For proving $\di_G(H)=\infty$, we shall construct a family of subgroups $K_N\leqslant_{fg}\Z^m\times F_n$, indexed by $N\in \N$, all of them with constant rank 3 (i.e., $\rrk(K_N)=2$), with all the intersections $H\cap K_N$ being finitely generated, but with $\rrk(H\cap K_N)$ tending to $\infty$, as $N\to \infty$.

Let $K_N=\langle t^{{a'_1}}u_1, t^{{a'_2}}u_2, L_{K_N}\rangle\leqslant \Z^m\times F_n$, where the vectors $a'_1, a'_2\in \Z^m$ and the subgroup $L_{K_N}\leqslant \Z^m$ are to be determined; note that for all choices $\rk(K_N \pi)=2$, and here we are already using the hypothesis $n_1\geqslant 2$.

Let us understand the intersection $H\cap K_N$ following the procedure (and notation) given in~\cite[Thm.~4.5]{DeV}: we have $n_2=\rk(K_N\pi)=2$, $H\pi \cap K_N\pi=\langle u_1, u_2 \rangle$ and so $n_3=\rk(H\pi \cap K_N\pi)=2$, and we consider the matrices
 $$
A=\left (\begin{array}{c} a_1 \\ \vdots \\ a_{n_1} \end{array}\right ) \in M_{{n_1}\times m}(\Z),\quad  A'=\left (\begin{array}{c} a'_1 \\ a'_2 \end{array}\right )\in M_{2\times m}(\Z).
 $$
Let $\rho_1 \colon H\pi \twoheadrightarrow \Z^{n_1}$, $\rho_2 \colon K_N\pi \twoheadrightarrow \Z^2$, and $\rho_3 \colon H\pi \cap K_N\pi \twoheadrightarrow \Z^2$ be the corresponding abelianization maps (not to be confused with the restrictions of the global abelianization map $F_n \twoheadrightarrow \Z^n$ to the corresponding domains). Clearly, the inclusion maps $\iota_H\colon H\pi\cap K_N\pi \hookrightarrow H\pi$ and $\iota_K\colon H\pi\cap K_N\pi \hookrightarrow K_N\pi$ abelianize, respectively, to the morphisms $\Z^2\to \Z^{n_1}$ and $\Z^2\to \Z^2$ given by the matrices
 $$
P=\left( \begin{array}{ccccc}
1 & 0 & 0 & \hdots & 0 \\ 0 & 1 & 0 & \hdots & 0 \end{array}\right)\in M_{2\times n_1}(\Z), \quad P'=I_2=\left(\begin{array}{cc} 1 & 0 \\ 0 & 1 \end{array}\right) \in M_{2\times 2}(\Z).
 $$
Moreover, let
 $$
R=PA-P'A'=\left(\begin{array}{c} a_1 \\ a_2 \end{array} \right)-\left( \begin{array}{c} a'_1 \\ a'_2 \end{array}\right) = \left( \begin{array}{c} a_1-a'_1 \\ a_2-a'_2 \end{array} \right)\in M_{2\times m}(\Z),
 $$
and let us put all these ingredients into the following diagram:
 \begin{equation}\label{dia}
\begin{aligned}
\xy
(0,5)*+{\rotatebox[origin=c]{270}{$\leqslant$}};
(0,10)*{(H \cap K_N) \pi};
(0,0)*+{H \pi \cap K_N \pi}; (-25,0)*+{H \pi}; (25,0)*+{K_N \pi};
{\ar@{_(->}_-{\iota} (0,0)*++++++++++{}; (-25,0)*++++{}};
{\ar@{^(->}^-{\iota'} (0,0)*++++++++++{}; (25,0)*++++{}};
(0,-20)*+{\Z^{2}}; (-25,-20)*+{\ZZ^{n_1}}; (25,-20)*+{\ZZ^{2}};
{\ar@{->>}^-{\rho_3} (0,0)*+++{}; (0,-20)*+++{}};
{\ar@{->>}_-{\rho_1} (-25,0)*+++{}; (-25,-20)*+++{}};
{\ar@{->>}^-{\rho_2} (25,0)*+++{}; (25,-20)*+++{}};
(-12.5,-10)*+{///};
(12.5,-10)*+{///};
(0,-20)*+{\ZZ^{2}}; (-25,-20)*+{\ZZ^{n_1}}; (25,-20)*+{\ZZ^2};
{\ar_-{P} (0,-20)*+++++{}; (-25,-20)*++++{}};
{\ar^-{P'} (0,-20)*+++++{}; (25,-20)*++++{}};
(0,-40)*+{\ZZ^{m}};
{\ar^-{A} (-25,-20)*++++{}; (0,-40)*++++{}};
{\ar_-{A'} (25,-20)*++++{}; (0,-40)*++++{}};
{\ar%@[red]
^-{R} (0,-20)*++++{}; (0,-40)*++++{}};
\endxy
\end{aligned}
 \end{equation}
According to the argument in~\cite[Thm.~4.5]{DeV}, the subgroup $(H\cap K_N)\pi \leqslant H\pi\cap K_N\pi$ is, precisely, the full preimage by $R$ and $\rho_3$ of $L_H+L_{K_N}\leqslant \Z^m$.

Let us choose now the vectors $a'_1=a_1-b_{s+1}$ and $a'_2=a_2$, and the subgroup $L_{K_N}=\langle Nb_{s+1}\rangle$, $N\in \mathbb{N}$; the matrix $R$ becomes
 $$
R=\left ( \, \begin{array}{c} b_{s+1} \\ 0 \end{array} \right).
 $$
We have $L_H+L_{K_N}=\langle \lambda_1b_1, \ldots ,\lambda_sb_s, Nb_{s+1}\rangle$ and then,
 $$
\begin{array}{ccl}
(L_H+L_{K_N})R^{-1} & = & \{(x,y)\in \Z^2 \mid (x\,\,\, y)R\in L_H+L_{K_N} \} \\ & = & \{(x,y)\in \Z^2 \mid xb_{s+1}\in L_H+L_{K_N} \} \\ & = & \{(x,y)\in \Z^2 \mid xb_{s+1}\in \langle Nb_{s+1}\rangle \} \\ & = & N\Z \times \Z \quad {\leqslant}_{N} \,\,\, \Z^2
\end{array}
 $$
(the last equality being true because $b_{s+1}$ has relatively prime coordinates). As $\rho_3$ is onto, taking $\rho_3$-preimages preserves the index and we have
 $$
(H\cap K_N)\pi=(L_H+L_{K_N})R^{-1}{\rho_3}^{-1}=(N\Z \times \Z){\rho_3}^{-1}\leqslant_N H\pi \cap K_N\pi.
 $$
Thus, by the Schreier index formula, $\rrk((H\cap K_N)\pi)=N\rrk(H\pi\cap K_N\pi)=N$ and we deduce that $\rrk(H\cap K_N)=N+\rk(L_H\cap L_{K_N})=N+0=N$ tends to $\infty$, as $N\to \infty$. This completes the proof that $\di_G(H)=\infty$.

\emph{(iii).} Fix a basis for $H$, say $\{ t^{a_1}u_1,\ldots, t^{a_{n_1}}u_{n_1}, t^{b_1}, \ldots ,t^{b_m}\}$, where $a_1,\ldots,a_{n_1}\in \Z^m$, $\{u_1,\ldots, u_{n_1}\}$ is a free basis for $H\pi$, and $\{ b_1,\ldots ,b_m\}$ is a free-abelian basis for $L_H\leqslant_{l} \Z^m$; in particular, $\rk(H\pi)=n_1\geqslant 2$, $\rk(L_H)=m$, and $\rk(H)=n_1+m$.

In order to show the inequality $\di_G(H)\leqslant l\di_{F_n}(H\pi)$, let us fix an arbitrary subgroup $K\leqslant_{fg} G$, assume that $H\cap K$ is finitely generated, and let us prove that $\rrk(H\cap K)/\rrk(K)\leqslant l\di_{F_n}(H\pi)$. Fix a basis for $K$, say $K=\langle t^{a_1'}v_1,\ldots, t^{a_{n_2}'}v_{n_2},L_K \rangle$ and we have
 \begin{equation}\label{10}
\frac{\rrk(H\cap K)}{\rrk(K)}=\frac{\rrk((H\cap K)\pi)+\rk(L_H\cap L_K)}{\rrk(K\pi)+\rk(L_K)}.
\end{equation}
As in the proof of part (ii), we consider the intersection diagram to understand $H\cap K$:
 \begin{equation}\label{m}
\begin{aligned}
\xy
(0,5)*+{\rotatebox[origin=c]{270}{$\leqslant$}};
(0,10)*{(H \cap K) \pi};
(0,0)*+{H \pi \cap K\pi}; (-25,0)*+{H \pi}; (25,0)*+{K\pi};
{\ar@{_(->}_-{\iota} (0,0)*++++++++++{}; (-25,0)*++++{}};
{\ar@{^(->}^-{\iota'} (0,0)*++++++++++{}; (25,0)*++++{}};
(0,-20)*+{\ZZ^{n_3}}; (-25,-20)*+{\ZZ^{n_1}}; (25,-20)*+{\ZZ^{n_2}};
{\ar@{->>}^-{\rho_3} (0,0)*+++{}; (0,-20)*+++{}};
{\ar@{->>}_-{\rho_1} (-25,0)*+++{}; (-25,-20)*+++{}};
{\ar@{->>}^-{\rho_2} (25,0)*+++{}; (25,-20)*+++{}};
(-12.5,-10)*+{///};
(12.5,-10)*+{///};
(0,-20)*+{\ZZ^{n_3}}; (-25,-20)*+{\ZZ^{n_1}}; (25,-20)*+{\ZZ^{n_2}};
{\ar_-{P} (0,-20)*+++++{}; (-25,-20)*++++{}};
{\ar^-{P'} (0,-20)*+++++{}; (25,-20)*++++{}};
(0,-40)*+{\ZZ^{m}};
{\ar^-{A} (-25,-20)*++++{}; (0,-40)*++++{}};
{\ar_-{A'} (25,-20)*++++{}; (0,-40)*++++{}};
{\ar%@[red]
^-{R} (0,-20)*++++{}; (0,-40)*++++{}};
\endxy
\end{aligned}
\end{equation}
where $\rho_1 \colon H\pi \twoheadrightarrow \Z^{n_1}$, $\rho_2 \colon
K\pi \twoheadrightarrow \Z^{n_2}$, and $\rho_3 \colon H\pi \cap K\pi \twoheadrightarrow \Z^{n_3}$ are the corresponding abelianization maps (here, $n_3=\rk(H\pi\cap K\pi)<\infty$), where $\iota$ and $\iota'$ are the natural inclusions, where $P\in M_{n_3\times n_1}(\Z)$ and $P'\in M_{n_3\times n_2}(\Z)$ are the matrices of their respective abelianizations (note that $\iota$ and $\iota'$ being injective do not imply $P$ and $P'$ necessarily being so; in particular, $n_3$ may very well be bigger than $n_1$ or $n_2$), where $A\in M_{n_1\times m}(\Z)$ and $A'\in M_{n_2\times m}(\Z)$ are the matrices with rows $\{a_1,\ldots,a_{n_1}\}$ and $\{a_1',\ldots,a_{n_2}'\}$ respectively, and where $R=PA-P'A'\in M_{n_3\times m}(\Z)$. According to the argument in~\cite[Thm.~4.5]{DeV}, the crucial property of diagram~(\ref{m}) is the fact that $(H\cap K)\pi =(L_H+L_K)R^{-1}\rho_3^{-1}$.

From the hypothesis, $L_H \leqslant_l \Z^m$ and so, $L_H+L_K \leqslant_{l'}\Z^m$, where $1\leqslant l'\leqslant l$. As in general $R$ is not necessarily onto, $(L_H+L_K)R^{-1}\leqslant_{l''}\Z^{n_3}$ with $1\leqslant l''\leqslant l'$. And, since $\rho_3$ is onto, $(H\cap K)\pi =(L_H+L_K)R^{-1}\rho_3^{-1}\leqslant_{l''} H\pi \cap K\pi$. Therefore, by the Schreier index formula,
 \begin{equation}\label{lost1}
\begin{array}{ccccc}
\rrk((H \cap K)\pi)=l''\rrk(H\pi \cap K\pi) & = & l''\frac{\rrk(H\pi \cap K\pi)}{\rrk(K\pi)}\rrk(K\pi) & \leqslant & l''\di_{F_n}(H\pi) \rrk(K\pi).
\end{array}
 \end{equation}
Now, using~\eqref{10}, we have
 \begin{equation}\label{11}
\frac{\rrk(H \cap K)}{\rrk(K)} \leqslant \frac{l''\di_{F_n}(H\pi) \rrk(K\pi)+ \rk(L_H\cap L_K)}{\rrk(K\pi)+\rk(L_K)}\leqslant \frac{l''\di_{F_n}(H\pi)\rrk(K\pi)}{\rrk(K\pi)}=l''\di_{F_n}(H\pi),
 \end{equation}
where the second inequality is an equality if $L_K=\{0\}$, and follows from applying Lemma~\ref{BadBoy} to $\frac{\rk(L_H\cap L_K)}{\rk(L_K)}\leqslant 1 \leqslant l''\di_{F_n}(H\pi)$ otherwise. Therefore
 \begin{equation}\label{lost2}
\frac{\rrk(H\cap K)}{\rrk (K)}\leqslant l''\di_{F_n}(H\pi)\leqslant l'\di_{F_n}(H\pi)\leqslant l\di_{F_n}(H\pi),
 \end{equation}
as we wanted.
\end{proof}

\section{Restricted degree of inertia for free-abelian times free groups}\label{section for rdi}

To improve the inequality from Theorem~\ref{mainI}~(iii) into an equality, we need to add a couple of technical restrictions on the subgroups $K$ over which the supremum in the definition of degree of inertia runs. This gives rise to the notion of restricted degree of inertia given in Definition~\ref{res def di}; in the particular case of interest, $G=\Z^m\times F_n$, it is the following:
 $$
\di'_G(H) =\sup_{\tiny{\begin{array}{c} K\leqslant_{fg} G \\ H\cap K\leqslant_{fg} G \\ {[}H\pi : H\pi \cap K\pi{]} = \infty \\  H\pi \cap K\pi \nleqslant [F_n, F_n] \end{array} } } \left\{ \frac{\trk(H \cap K)}{\trk(K)} \right\} \leqslant \di_G(H),
 $$
applied to subgroups $H\leqslant_{fg} G$ such that $r(H\pi)\geqslant 2$ and $H\pi \nleqslant [F_n, F_n]$. The main result in the present section is Theorem~\ref{mainII}. The proofs for part (i) and for the inequality $\leqslant$ from (ii) work almost exactly equal as the corresponding parts from Theorem~\ref{mainI}. The inequality $\di_G(H) \geqslant l\di_{F_n}(H\pi)$ from (ii) is more involved and will require the previous development of several lemmas about intersections of subgroups of $F_n$, and a strong use of the well-known tool of pull-backs of graphs for working with intersections of finitely generated subgroups of $F_n$; we concentrate these technicalities into Claim~\ref{hard} and postpone its proof until having the lemmas available.

\begin{proof}[Proof of Theorem~\ref{mainII}]
\textit{(i)}. Follow the same arguments as in Theorem~\ref{mainI}~(ii) with the following detail in mind: by the assumption $H\pi\not\in [F_n, F_n]$ we can assume, from the very beginning and without loss of generality, that $u_1 \notin [F_n, F_n]$, i.e., the first element in the chosen free-basis for $H\pi$ is outside the commutator $[F_n, F_n]$. Now the goal is to construct a family of subgroups $K_N \leqslant_{fg} \Z^m\times F_n$, indexed by $N\in \N$, all of them having rank $3$, with all the intersections $H\cap K_N$ being finitely generated, \emph{and further satisfying} $[H\pi : H\pi \cap K_N\pi] = \infty$ \emph{and} $H\pi \cap K_N\pi \nleqslant [F_n, F_n]$, such that $\rrk(H\cap K_N)$ tends to infinity, as $N\to \infty$.

The construction of these $K_N$'s will be similar to that in Theorem~\ref{mainI}(ii), but with slight technical modifications in order to get the extra conditions. Take $K_N=\langle t^{{a'_1}}u_1^2, t^{{a'_2}}u_2^2, L_{K_N}\rangle\leqslant \Z^m\times F_n$, where the vectors $a'_1, a'_2\in \Z^m$ and the subgroup $L_{K_N}\leqslant \Z^m$ are to be determined. Note that $H\pi \cap K_N\pi = \langle u_1^2, u_2^2 \rangle \leqslant_\infty H\pi$, and also $H\pi \cap K_N\pi \nleqslant [F_n, F_n]$ as $u_1^2 \notin [F_n, F_n]$ (since $u_1 \notin [F_n, F_n]$ and $F_n/[F_n, F_n]= \Z^n$ is torsion-free).

The rest of the argument works in a parallel way, just realizing that now $P=\left( \begin{smallmatrix} 2 & 0 & 0 & \cdots & 0 \\ 0 & 2 & 0 & \cdots & 0 \end{smallmatrix}\right)\in M_{2\times n_1}(\Z)$ and so,
 $$
R=PA-P'A'=\left(\begin{array}{c} 2a_1 \\ 2a_2 \end{array} \right)-\left( \begin{array}{c}
a'_1 \\ a'_2 \end{array}\right) = \left( \begin{array}{c} 2a_1-a'_1 \\ 2a_2-a'_2 \end{array}
\right)\in M_{2\times m}(\Z),
 $$
Choosing the vectors $a'_1=2a_1-b_{s+1}$ and $a'_2=2a_2$, and the subgroup $L_{K_N}= \langle Nb_{s+1}\rangle \leqslant \Z^m$, the rest of the proof proceeds verbatim.

\textit{(ii)-$\leqslant$}. In order to show the inequality $\di'_G(H)\leqslant l\di'_{F_n}(H\pi)$, let us fix an arbitrary subgroup $K\leqslant_{fg} G$, assume $H\cap K\leqslant_{fg} G$ \emph{and also} $[H\pi : H\pi \cap K\pi] =\infty$ \emph{and} $H\pi \cap K\pi \nleqslant [F_n:F_n]$, and we have to prove that $\rrk(H\cap K)/\rrk(K)\leqslant l\di'_{F_n}(H\pi)$. Exactly the same arguments as in Theorem~\ref{mainI}~(iii) work here, with the caution that the inequality in Equation~\eqref{lost1} is still true with $\di_{F_n}(H\pi)$ replaced by the (eventually smaller) value $\di'_{F_n}(H\pi)$, since the involved subgroup $K\pi$ further satisfies $H\pi \cap K\pi \leqslant_\infty H\pi$ and $H\pi \cap K\pi \nleqslant [F_n, F_n]$, by construction.

\textit{(ii)-$\geqslant$}. By hypothesis, $\rk(H\pi )\geqslant 2$ and so, the Stallings' graph $\Gamma(H\pi )$ has at least one vertex $p$ of degree bigger than 2. Without loss of generality, we can assume that it is the basepoint $\odot$ who has degree at least $3$: in fact, let $w\in F_n$ be the label of any path from $\odot$ to $p$ and, replacing $H$ by $H^w$ (and so, $H\pi$ by $H^w\pi =(H\pi)^w$), the inequality to prove does not change; see Lemma~\ref{conjugation}.

Let $\{ t^{a_1}u_1,\ldots, t^{a_{n_1}}u_{n_1}, t^{b_1}, \ldots ,t^{b_m}\}$ be a basis for the subgroup $H\leqslant_{fg} G$, where $a_1,\ldots,a_{n_1}\in \Z^m$, $\{u_1,\ldots, u_{n_1}\}$ is a free basis for $H\pi$, $u_1\not\in [F_n,F_n]$, and $\{ b_1,\ldots ,b_m\}$ is a free-abelian basis for $L_H\leqslant_{l} \Z^m$; in particular, $\rk(H\pi)=n_1\geqslant 2$, $\rk(L_H)=m$, and $\rk(H)=n_1+m$.

In order to prove the inequality, $\di'_G(H) \geqslant l\di'_{F_n}(H\pi)$, we fix $\epsilon >0$ and will construct a subgroup $K_\epsilon \leqslant_{fg} G$ satisfying $H \cap K_\epsilon\leqslant_{fg} G$, $[H\pi : H\pi \cap K_\epsilon\pi] =\infty$, $H\pi \cap K_\epsilon\pi \nleqslant [F_n, F_n]$ and, furthermore, $\trk(H \cap K_\epsilon)/\trk(K_\epsilon) > l\di'_{F_n}(H\pi)- \epsilon$. For any candidate $K\leqslant G$, equations~\eqref{lost1}, \eqref{11}, and~\eqref{lost2} above (with $\di'_{F_n}$ instead of $\di_{F_n}$) contain all possible reasons for which the quotient $\trk(H \cap K)/\trk(K)$ may be less than $l\di'_{F_n}(H\pi)$, namely:
\begin{enumerate}
    \item[(I)] $\trk(H\pi \cap K\pi)/\trk(K\pi) \leqslant \di'_{F_n}(H\pi)$;
    \item[(II)] $\frac{l''\di'_{F_n}(H\pi)\rrk(K\pi)+\rk(L_H\cap L_K)}{\rrk(K\pi)+\rk(L_K)}\leqslant \frac{l''\di'_{F_n}(H\pi)\rrk(K\pi)}{\rrk(K\pi)}$;
    \item[(III)] $l'\leqslant l$;
    \item[(IV)] $l''\leqslant l'$.
\end{enumerate}
Choosing $K\pi$ so that $\trk(H\pi \cap K\pi)/\trk(K\pi) >\di'_{F_n}(H\pi)-\epsilon$ we can make the inequality in~(I) arbitrarily tight; choosing $L_K=0$ inequalities~(II) and~(III) become equalities; and, finally, if the linear map $R\colon \Z^{n_3}\to \Z^m$ from diagram~\eqref{m} is onto then inequality~(IV) becomes an equality. In view of these, we claim that

\begin{claim}\label{hard}
Given $\epsilon>0$, there exists $M\leqslant_{fg} F_n$ (with a free basis $\{v_1, \ldots ,v_{n_2}\}$), and there exist vectors $a'_1, \ldots, a'_{n_2}\in \Z^m$ such that:
\begin{itemize}
\item[(i)] $[H\pi : H\pi \cap M]=\infty$;
\item[(ii)] $H\pi \cap M \nleqslant [F_n, F_n]$;
\item[(iii)] $\trk(H\pi \cap M)/\trk(M) >\di'_{F_n}(H\pi)- \epsilon$;
\item[(iv)] $R=PA-P'A'\colon \Z^{n_3}\to \Z^m$ is onto, where $P,A,P'$ are the matrices appearing in diagram~\eqref{m} and $A'$ is the matrix with rows $a'_1, \ldots, a'_{n_2}\in \Z^m$.
\end{itemize}
\end{claim}

Observe that the existence of $M$ satisfying~(i), (ii), and~(iii) is immediate from the definition of $\di'_{F_n}(H\pi)$. Assuming, on top of these, condition~(iv) is more tricky: the choice of $M$ determines the ranks $n_2=\rk(M)$ and $n_3=\rk(H\pi \cap M)$, and it could very well happen that $n_3<m$, making then impossible to choose the vectors $a'_1, \ldots, a'_{n_2}\in \Z^m$ in such a way that $R$ is onto. This situation forces us to manipulate $M$ and make sure to get $n_3$ big enough, to have enough freedom, to choose $A'$, so that $R$ is onto; and all this without loosing the $\epsilon$ inequality (iii) (neither~(i) nor~(ii)). Here is where the extra technical conditions~(i) and~(ii) added to the definition of restricted degree of inertia are going to play a crucial role. Let us postpone the proof of the claim and continue with the main argument.

Given $\epsilon>0$, apply Claim~\ref{hard} to $\epsilon/l$: we get $M=\langle v_1, \ldots ,v_{n_2}\rangle\leqslant_{fg} F_n$, and vectors $a'_1, \ldots, a'_{n_2}\in \Z^m$ satisfying~(i), (ii), $\trk(H\pi \cap M)/\trk(M) >\di'_{F_n}(H\pi)- \epsilon/l$, and~(iv). The subgroup $K_{\epsilon}=\langle t^{a_1'}v_1,\ldots, t^{a_{n_2}'} v_{n_2} \rangle \leqslant_{fg} G$ satisfies $K_{\epsilon}\pi=M$ and $L_{K_{\epsilon}}=0$ hence,
\begin{itemize}
\item $H\cap K_\epsilon \leqslant_{fg} G$, since $(H\cap K_{\epsilon})\pi =(L_H+L_{K_{\epsilon}})R^{-1}\rho_3^{-1}\leqslant_{l''} H\pi \cap K_{\epsilon}\pi$,
\item $[H\pi : H\pi \cap K_\epsilon\pi ]=[H\pi : H\pi \cap M] =\infty$,
\item $H\pi \cap K_\epsilon\pi =H\pi \cap M\nleqslant [F_n, F_n]$,
\end{itemize}
and also
 $$
\frac{\trk(H\cap K_\epsilon)}{\trk(K_\epsilon)} =\frac{\trk((H\cap K_\epsilon)\pi)+\rk (L_H\cap L_{K_\epsilon})}{\trk(K_\epsilon\pi)+\rk (L_{K_\epsilon})} =\frac{\trk((H\cap K_\epsilon)\pi)}{\trk(K_\epsilon\pi)}=\frac{l''\trk(H\pi \cap K_\epsilon \pi)}{\trk(K_\epsilon\pi)}=
 $$
 $$
=\frac{l'\trk(H\pi \cap M)}{\trk(M)}=\frac{l\trk(H\pi \cap M)}{\trk(M)}>l(\di'_{F_n}(H\pi)- \epsilon/l)=l\di'_{F_n}(H\pi)- \epsilon.
 $$
Therefore, $\di'_G(H) \geqslant l\di'_{F_n}(H\pi)$ as we wanted to prove.
\end{proof}

Before proving Claim~\ref{hard}, we need to develop several lemmas about intersections of subgroups of $F_n$. A well-known tool for understanding these intersections is the pull-back of graphs.

\begin{dfn}
Let $N,M\leqslant_{fg} F_n$ and consider its Stallings' graphs $\Gamma(N),\Gamma(M)$, respectively. Its \emph{direct product}, $\Gamma(N)\times \Gamma(M)$, is defined as the new graph having as set of vertices $V\Gamma(N)\times V\Gamma(M)$, set of $x_i$-labelled edges $E_{x_i}\Gamma(N) \times E_{x_i}\Gamma(M)$ (here, $E_{x_i}\Gamma$ denotes the set of edges in $\Gamma$ labelled by the letter $x_i\in X$), with the natural incidence functions $\iota (e, f)=(\iota e,\iota f)$ and $\tau (e,f)=(\tau e,\tau f)$, and with basepoint being the pair of basepoints $(\bp, \bp)$.

Clearly, $\Gamma(N)\times \Gamma(M)$ is folded, but neither connected nor free of degree one vertices, in general. The \emph{pull-back of $\Gamma(N)$ and $\Gamma(M)$}, denoted $\Gamma(N)\wedge \Gamma(M)$ is the result of trimming (i.e., repeatedly deleting vertices of degree one different from the basepoint) the connected component of the basepoint ($\bp, \bp$).
\end{dfn}

It is well known (see, for example, \cite{KM} for details) that $\Gamma(N)\wedge\Gamma(M)\simeq \Gamma(N \cap M)$, the Stallings' graph for $N \cap M$. In particular, if both $N,M$ are finitely generated then so is $N\cap M$; this is the Howson property for $F_n$.

\begin{dfn} Let $\Gamma(N)$ be the Stallings' graph for $N\leqslant_{fg} F_n$. For every vertex $p \in V\Gamma(N)$ and every element $w \in F_n$, we define $pw$ to be the terminal vertex of the unique reduced path $\gamma$ in $\Gamma(N)$ starting at $p$ and with label $w$, in case it exists; otherwise, $pw$ is \textit{undefined}. Note that $w \in N$ if and only if $\bp w$ is defined and equals $\bp$. Note also that $N$ has finite index in $F_n$ if and only if $\Gamma(N)$ is complete and if and only if $\bp w$ is defined in $\Gamma(N)$, for every $w \in F_n$.
\end{dfn}

\begin{lemma}\label{undefined 1}
Let $N,M\leqslant_{fg} F_n$, with the basepoint from $\Gamma(N)$ having degree at least 3. Then, $N\cap M$ has infinite index in $N$ if and only if there exists $w \in N$ such that $\bp w$ is undefined in $\Gamma(M)$.
\end{lemma}

\begin{proof}
Suppose $N\cap M$ has finite index in $N$; so, there is $r\geqslant 1$ such that, for every $w\in N$, $w^r\in N\cap M$. This means that, for every cyclically reduced $w\in N$, $(\bp,\bp)w^r$, and so $(\bp, \bp)w$, is defined in $\Gamma(N\cap M)$; hence, projecting to $\Gamma(M)$, $\bp w$ is defined in $\Gamma(M)$. For those $w\in N$ not cyclically reduced, write $w=u^{-1}\cdot w'\cdot u$, without cancellations, with $w'$ being cyclically reduced, and with $u\neq 1$; in this case, we just have that $(\bp,\bp)w^r=(\bp,\bp)u^{-1}w'^ru$, and so $(\bp,\bp)u^{-1}w'$, is defined in $\Gamma(N\cap M)$. But the basepoint in $\Gamma(N)$ has degree at least 3 so we can take a cyclically reduced $1\neq v\in N$ such that the product $w\cdot v=u^{-1}\cdot w'\cdot u\cdot v\in N$ has no cancellation and is cyclically reduced again; then $(\bp,\bp)wv$, and so $(\bp,\bp)w$, is defined in $\Gamma(N\cap M)$ and, hence, $\bp w$ is defined in $\Gamma(M)$.

For the other implication suppose that, for every $w\in N$, $\bp w$ is defined in $\Gamma(M)$, say $\bp w\in \{p_0 =\bp, p_1,\ldots ,p_r\} \subseteq V\Gamma(M)$. Choose a maximal tree $T$ in $\Gamma(M)$ and define $w_i =\ell(T[\bp, p_i]) \in F_n \text{ for } i=0,\ldots ,r$ (note that $w_0 =1$). The hypothesis tells us that $N \subseteq M \sqcup Mw_1 \sqcup \cdots \sqcup Mw_r$. Intersecting with $N$, we get $N \subseteq (N \cap M)\sqcup(N \cap M)v_1 \sqcup \cdots \sqcup(N \cap M)v_s$ for some $v_i \in N$ and $s \leqslant r$ (where we have deleted the possibly empty intersections). Since the other inclusion is immediate, we deduce that $N \cap M$ has finite index in $N$.
\end{proof}

\begin{prop}\label{p-expansion}[$p$-Expansion] Let $N, M\leqslant_{fg} F_n$, and suppose that $\rk(N)\geqslant 2$, the basepoint $\bp$ of $\Gamma(N)$ has degree at least $3$, and $N\cap M\leqslant_{\infty} N$. Then, for every $1\leqslant p\leqslant \infty$, there exist $p$ freely independent elements $w_1,\ldots ,w_p \in N$ such that $M\leqslant_{ff} M'=M*\langle w_1,\ldots ,w_p\rangle$ and $N\cap M\leqslant_{ff} (N\cap M)*\langle w_1,\ldots ,w_p\rangle \leqslant_{ff} N\cap M'\leqslant_{\infty} N$.
\end{prop}

\begin{proof}
Let $e_a, e_b, e_c$ be three different edges going out from $\bp$ in $\Gamma(N)$, $\iota e_a = \iota e_b = \iota e_c = \bp$, with pairwise different labels $a, b, c \in X^{\pm 1}$, respectively. By Lemma \ref{undefined 1}, there is $u_0 \in N$ such that $\bp u_0$ is undefined in $\Gamma(M)$. Realize $u_0$ as a reduced closed path $\gamma_0$ at $\bp$ in $\Gamma(N)$ and, without lost of generality, we can assume it finishes with $e_a^{-1}$. For $\alpha =a,b,c$, take a non-trivial reduced path $\eta_\alpha$ in the graph $\Gamma(N) \setminus \{ e_\alpha \}$ and closed at $\tau e_\alpha$ (there always exists such a path because $\rk(N) \geqslant 2$, even if $e_\alpha$ is a bridge since $\Gamma(N)$ has no vertices of degree $1$ except possibly $\bp$); now consider $\gamma_\alpha = e_\alpha \eta_\alpha e_\alpha^{-1}$ , a reduced closed path at $\bp$ in $\Gamma(N)$, beginning with $e_\alpha$ and ending with $e_\alpha^{-1}$ (so, its
label $u_\alpha = \ell(\gamma_\alpha) \in N$ is a reduced word on $X^{\pm 1}$ beginning with $\alpha$ and ending with $\alpha^{-1}$). Note then that the paths $\gamma_0, \gamma_1 =\gamma_0\gamma_b, \gamma_2 =\gamma_0\gamma_b\gamma_a, \gamma_3 =\gamma_0\gamma_b \gamma_a \gamma_b, \ldots$, and also the paths $\gamma_i\gamma_c\gamma_i^{-1}$, $i\geqslant 1$, are reduced as written; furthermore, all of them are closed paths at $\bp$ in $\Gamma(N)$ so, $w_i =\ell(\gamma_i \gamma_c \gamma_i^{-1}) \in N$, for all $i\geqslant 1$.

Now, let us extend the graph $\Gamma(M)$ by adding the necessary vertices and edges so that we can read all the paths $\gamma_i\gamma_c \gamma_i^{-1}$ from $\bp$, $i=1,\ldots ,p$: since $\bp u_0$ was undefined in $\Gamma(M)$, possibly an initial segment of $\gamma_0$ is readable in $\Gamma(M)$ but not the entire path, forcing us to append at least a new edge sticking out from $\Gamma(M)$; behind it, we add the rest of the construction, see Fig.~\ref{expan} (this means adding infinitely many new vertices and edges in the case $p=\infty$). Since the added paths are all reduced, the resulting graph presents no foldings and so it is a (possibly infinite) Stallings' graph, having $\Gamma(M)$ as a subgraph. Hence, $M$ is a free factor of its fundamental group, $M\leqslant_{ff} M'=M*\langle w_1,\ldots ,w_p\rangle$.

And let us compare the pull-backs $\Gamma(N)\wedge \Gamma(M)=\Gamma (N\cap M)$ and $\Gamma(N)\wedge \Gamma(M')=\Gamma (N\cap M')$. Since $w_i\in N$ for all $i\geqslant 1$, it is clear that $\Gamma(N)\wedge \Gamma(M')$ contains, as a subgraph, $\Gamma(N) \wedge \Gamma(M)$ with the same additions as in Fig.~\ref{expan}, and, possibly, more edges which we do not control. Therefore, $N\cap M\leqslant_{ff} (N\cap M)*\langle w_1,\ldots ,w_p\rangle \leqslant_{ff} N\cap M'$. 

Finally, to see that $N\cap M'$ is still an infinite index subgroup of $N$, observe that $w=\ell(\gamma_0\gamma_c)\in N$ is such that $\bp w$ is not defined in $\Gamma(M')$ (see Fig.~\ref{expan}) and apply Lemma~\ref{undefined 1}.
\end{proof}

\begin{figure}
\centering
\begin{tikzpicture} [every edge/.style={draw,dotted}]
\node [ cloud, cloud puffs=9, draw, minimum width=4cm, minimum height=2.5cm]
at (0,0) (m) {\textbf{$\Gamma(M)$}$\odot$};
\node [circle,draw] (s) at (3.3,-0.2) {};
\node [circle,draw] (p) at (5.3,-1) {};
\node [ellipse,draw] (q) at (5.3,1) {$\phantom{aa}\eta_c\phantom{aa}$};
\node [circle,draw] (p_1) at (8.3,-1) {};
\node [ellipse,draw] (q_1) at (8.3,1) {$\phantom{aa}\eta_c\phantom{aa}$};
\node [circle,draw] (m_1) at (9.8,-1) {};
\path[->] (p) edge node [left] {$e_c$} (q);
\path[->] (p_1) edge node [left] {$e_c$} (q_1);
\draw [] (node cs:name=p) -- (node cs:name=q);
\draw [] (node cs:name=p_1) -- (node cs:name=q_1);
\path[->] (m) edge [bend left] node [above] {$\gamma_0$} (s);
\draw [dotted,thick,] (0.4,0) -- (1.6,.8);
\draw [dotted,thick,] (10.6,-1) -- (10.82,-1);
\draw [dotted,thick,] (10.6,-1) -- (10.82,-1);
\draw [dotted,thick,] (11.32,-1) -- (11.54,-1);
\draw [dotted,thick,] (11.32,-1) -- (11.54,-1);
\draw [dotted,thick,] (11.92,-1) -- (12.14,-1);
\draw [dotted,thick,] (11.92,-1) -- (12.14,-1);
\path [->] (s) edge[bend left] node [above] {$\gamma_b$} (p);
\path [->] (p) edge[bend right] node [above] {$\gamma_a$} (p_1);
\path [->] (p_1) edge[bend left] node [above] {$\gamma_b$} (m_1);
\end{tikzpicture}
\caption{Expansion of $\Gamma(M)$.}\label{expan}
\end{figure}
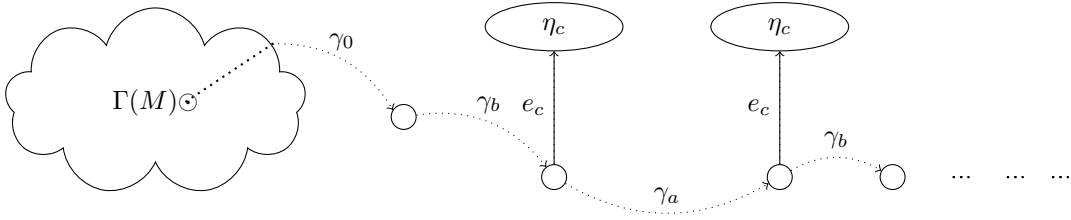

\begin{obs}\label{index}
Let $K\trianglelefteq F_n$ be a normal subgroup of $F_n$. For any $M\leqslant F_n$, $KM=\langle K,M\rangle$.
\end{obs}

\begin{proof}
It is obvious that $KM=\{ km \mid k\in K,\, m\in M\}\subseteq \langle K,M\rangle$. For the other inclusion note that, by normality, given $m\in M$ and $k\in K$, there exists $k'\in K$ such that $mk=k'm$. Repeated applications of this fact converts an arbitrary expression $k_1m_1\cdots k_rm_r\in\langle K,M\rangle$, with $k_i\in K$, $m_i\in M$, into a single product $km$, $k\in K$, $m\in M$. Therefore, $KM=\langle K,M\rangle$.
\end{proof}

\begin{lemma}\label{g}
Let $G$ be a group and $N,M\leqslant G$. Then, $[N:N\cap M]\leqslant [G:M]$, with equality if $MN=G$. If additionally $[N:N\cap M]$ is finite, the equality holds if and only if $MN=G$.
\end{lemma}

\begin{proof}
Let $G=\sqcup_{i\in I}M g_i$ be the coset decomposition of $G$ modulo $M$, where $|I|=[G:M]\leqslant \infty$. Intersecting with $N$ (and removing the possibly empty terms), we have $N=\sqcup_{i\in I} (N\cap Mg_i) =\sqcup_{i'\in I'} (N\cap M)n_i$, for some $I'\subseteq I$ and $n_i\in N$. So, $[N:N\cap M]=|I'|\leqslant |I|=[G:M]$.

Furthermore, for $g\in G$, $Mg$ intersects $N$ non-trivially if and only if $g\in MN$. So, if $G=MN$ then $I'=I$ and $[N:N \cap M]=|I'|=|I|=[G:M]$ (with the converse being also true whenever $|I'|<\infty$).
\end{proof}

\begin{cor}\label{index 1}
Let $K \trianglelefteq_{d} F_n$, and $M \leqslant F_n$, then $[M:M\cap K]=d$ if and only if $\langle K,M\rangle=F_n$. \qed
\end{cor}

Let us now fix a letter $x_j\in X$, and an integer $d\in \Z$, and consider the particular normal subgroup $K^j_d =\{ w\in F_n \mid |w|_{x_j} \in d\,\Z \}\unlhd F_n$, where $|w|_{x_j}$ denotes the $x_j$-th coordinate of the abelianization of $w$. The Stallings' graph $\Gamma(K^j_d)$ is depicted in Fig.~\ref{fig K_d^z} (the loops at each vertex representing all the $n-1$ remaining letters). It is clear that $K^j_d \trianglelefteq_{d} F_n$.
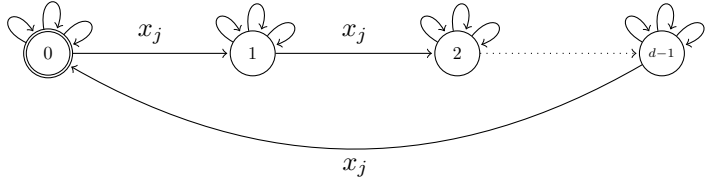
\begin{figure}
\centering
\begin{tikzpicture}[shorten >=1pt,node distance=4cm,auto]
\node [state,accepting,scale=0.70] (1) {$0$};
\node[state, scale=0.68] (2) [ right of = 1] {$1$};
\node[state, scale=0.68] (3) [ right of = 2] {$2$};
\node[state, scale=0.68] (4) [ right of = 3] {$_{d-1}$};
\path[->] (2) edge [in=70,out=100,loop] node[auto] {} ();
\path[->] (2) edge  [in=20,out=50,loop] node[auto] {} ();
\path[->] (2) edge  [in=120,out=150,loop] node[auto] {} ();
\path[->] (3) edge [in=70,out=100,loop] node[auto] {} ();
\path[->] (3) edge  [in=20,out=50,loop] node[auto] {} ();
\path[->] (3) edge  [in=120,out=150,loop] node[auto] {} ();
\path[->] (1) edge [in=70,out=100,loop] node[auto] {} ();
\path[->] (1) edge  [in=20,out=50,loop] node[auto] {} ();
\path[->] (1) edge  [in=124,out=154,loop] node[auto] {} ();
\path[->] (4) edge [in=70,out=100,loop] node[auto] {} ();
\path[->] (4) edge  [in=20,out=50,loop] node[auto] {} ();
\path[->] (4) edge  [in=120,out=150,loop] node[auto] {} ();
\path[->] (1) edge node {$x_j$} (2);
\path[->] (2) edge node {$x_j$} (3);
\path[->,dotted] (3) edge node {} (4);
\path[->] (4) edge [bend left] node {$x_j$} (1);
\end{tikzpicture}
\caption{Stallings' graph for $K^j_d$}
\label{fig K_d^z}
\end{figure}

\begin{lemma}\label{index 5}
Let $M\leqslant_{fg} F_n$, and fix a letter $x_j\in X$ and an integer $d\in \Z$. Then the following conditions are equivalent:
\begin{enumerate}
\item[(a)] $K^j_d M=\langle K^j_d, M\rangle=F_n$;
\item[(b)] $M\cap K^j_d \leqslant_d M$;
\item[(c)] there exists a word $m\in M$ such that $gcd(|m|_{x_j},d)=1$;
\item[(d)] the direct product $\Gamma(M)\times \Gamma(K^j_d)$ is connected.
\end{enumerate}
\end{lemma}

\begin{proof}
$(a)\Leftrightarrow (b)$. True by Observation~\ref{index} and Corollary~\ref{index 1}.

$(a)\Rightarrow (c)$. From the hypothesis, $x_j\in F_n$ can be written as $x_j=km$, for some $k\in K^j_d$ and some $m\in M$. Thus, $1=|x_j|_{x_j}=|k|_{x_j}+|m|_{x_j}=\lambda d+|m|_{x_j}$ for some $\lambda \in \Z$ and hence, $gcd(|m|_{x_j},d)=1$.

$(c)\Rightarrow (d)$. In the direct product $\Gamma(M)\times \Gamma(K^j_d)$, consider the $d$ full subgraphs $\Delta_i$, $i=0, \ldots, d-1$, whose vertices are $V\Delta_i =\{ (p,i) \mid p\in V\Gamma(M)\}$. In order to see that $\Gamma(M)\times \Gamma(K^j_d)$ is connected, we shall prove the existence of a path from $(\bp,i)$ to $(\bp,i+1)$, for every $i$ (indices modulo $d$), and that each $\Delta_i$ is connected.

In fact, by hypothesis there exists $m\in M$ and $\alpha, \beta \in \Z$ such that $gcd(|m|_{x_j},d)=\alpha |m|_{x_j}+\beta d=1$. Let $\gamma_m$ denote the path in $\Gamma(M)$, closed at $\bp$, whose label is $m$, and note that $\ell (\gamma_m^{\alpha})=m^{\alpha}$. Furthermore, in the complete graph $\Gamma(K^j_d)$, $i\cdot m^\alpha$ is defined and equals $i+1 \mod d$ (by Bezout's equality above). Hence, there is also a path labelled $m^\alpha$ in $\Gamma(M)\times \Gamma(K^j_d)$ from $(\bp, i)$ to $(\bp, i+1)$, for every $i=0,\ldots ,d-1$.

On the other hand, let $p$ be any arbitrary vertex in $\Gamma(M)$. As $\Gamma(M)$ is connected, there is a path, say $\gamma$, from $\iota(\gamma)=\bp$ to $\tau(\gamma)=p$. Let $w=\ell(\gamma) \in F_n$, let $s=|w|_{x_j}$, and consider the path $\gamma_{m^{-s\alpha}w}$ starting at $\bp$, reading $m^{-s\alpha}w$, and ending at $p$. Since $|m^{-s\alpha }w|_{x_j} =-s\alpha |m|_{x_j}+|w|_{x_j}=0 \mod d$, and $K^j_d$ is complete, $m^{-s\alpha }w$ is the label of a closed path in $\Gamma (K^j_d)$ at any vertex. Hence, there is a path in $\Delta_i\subseteq \Gamma(M)\times \Gamma(K^j_d)$ starting at $(\bp, i)$ and ending at $(p, i)$.

$(d)\Rightarrow (b)$. The graph $\Gamma (K^j_d)$ has exactly $d$ vertices, and $d$ edges labelled by each letter; see Fig.~\ref{fig K_d^z}. As $\Gamma(M)\times \Gamma(K^j_d)$ is connected, we have 
 \begin{align*}
\trk(M\cap K^j_d) & =-|V\Gamma(M\cap K^j_d)|+|E\Gamma(M\cap K^j_d)| \\ & =-|V\Big( \Gamma(M)\times \Gamma(K^j_d)\Big) |+|E\Big( \Gamma(M)\times \Gamma(K^j_d)\Big) | \\ & =-d|V\Gamma(M)|+d|E\Gamma(M)| \\ & =d\trk(M).
 \end{align*}
Hence, by Schreier index formula, $M\cap K^j_d\leqslant_{d} M$ (and not less).
\end{proof}

With these ingredients we can already proof Claim~\ref{hard}.

\begin{proof}[Proof of Claim~\ref{hard}]
We are given a subgroup $H\leqslant_{fg} G$ with $\rk(H\pi)\geqslant 2$, $H\pi \not\in [F_n, F_n]$, with the basepoint in $\Gamma(H\pi)$ having degree at least 3, with $L_H=H\cap \Z^m\leqslant_l \Z^m$ (and so, $\rk(L_H)=m$), and with basis $\{ t^{a_1}u_1,\ldots, t^{a_{n_1}}u_{n_1}, t^{b_1}, \ldots ,t^{b_m}\}$, such that $u_1\not\in [F_n,F_n]$; we are also given $\epsilon >0$, and we have to find a subgroup $M\leqslant_{fg} F_n$ and vectors $a'_1, \ldots, a'_{n_2}\in \Z^m$, $n_2=\rk(M)$, satisfying simultaneously conditions (i)-(iv). Let us distinguish two cases.

\noindent \textit{\bf \boldmath Case-1: $\di'_{F_n}(H\pi)>1$}. Making $\epsilon$ smaller if necessary, there always exists a subgroup $M_1\leqslant_{fg} F_n$ such that $[H\pi : H\pi\cap M_1]=\infty$, $H\pi\cap M_1\nleqslant [F_n, F_n]$ and
 \begin{equation}\label{index 2}
\frac{\trk(H\pi \cap M_1)}{\trk(M_1)}>\di_{F_n}'(H\pi)-\frac{\epsilon}{2}>1.
 \end{equation}
Hence, both reduced ranks are $\rrk(H\pi \cap M_1)\geqslant 1$ and $\rrk(M_1)\geqslant 1$ (recall that, in the definition of $\di'_{F_n}(\cdot)$, $0/0$ is understood to be $1$). As $H\pi \cap M_1 \nleqslant [F_n, F_n]$, there exists $v\in H\pi\cap M_1$ and a letter $x_j\in X$ such that $|v|_{x_j}=\lambda \neq 0$. Write $\lambda=p_1^{\alpha_1}\cdots p_r^{\alpha_r}$, where each $p_i$ is a prime divisor of $\lambda$. Now choose a big enough prime number $d\gg 0$, such that $\gcd(\lambda, d)=1$ and $d>2m\di'_{F_n}(H\pi)/\epsilon$. We have $\epsilon \rrk(M_1 )\big( d\rrk(M_1 )+m\big) \geqslant \epsilon d\rrk(M_1 )>2m\di'_{F_n}(H\pi)\rrk(M_1 )\geqslant 2m\rrk(H\pi \cap M_1 )$ and so,
 \begin{align*}
2\big( d\rrk(H\pi \cap M_1 )+m\big) \rrk(M_1 ) & \geqslant 2d\rrk(H\pi \cap M_1)\rrk(M_1 ) \\ & > 2d\rrk(H\pi \cap M_1 )\rrk(M_1 )+2m\rrk(H\pi \cap M_1 )-\epsilon \rrk(M_1 )\big( d\rrk(M_1 )+m\big) \\ & = 2\rrk(H\pi \cap M_1 )\big( d\rrk(M_1 )+m\big)-\epsilon \rrk(M_1)(d\rrk(M_1 )+m) \\ & = \big( 2\rrk(H\pi \cap M_1)-\epsilon \rrk(M_1)\big) \big( d\rrk(M_1 )+m\big).
 \end{align*}
Dividing both sides by $2\big( d\rrk(M_1 )+m\big)\rrk(M_1 )\neq 0$ we get
\begin{equation}\label{star equ}
\frac{d\trk(H\pi \cap M_1 )+m}{d\trk(M_1 )+m}>\frac{\trk(H\pi \cap M_1 )}{\trk(M_1 )}-\frac{\epsilon}{2}.
\end{equation}

Now, consider $K^j_d\unlhd_d F_n$ and $M_2:= M_1\cap K^j_d$. Since the element $v\in H\pi\cap M_1\leqslant M_1$ satisfies $\gcd(|v|_{x_j}, d)=\gcd (\lambda, d)=1$, Lemma~\ref{index 5} tells us that $M_2\leqslant_d M_1$ and $H\pi \cap M_2=(H\pi \cap M_1) \cap K^j_d \leqslant_d H\pi \cap M_1$, and also $\Gamma(M_1)\wedge \Gamma(K^j_d)=\Gamma(M_1)\times \Gamma(K^j_d)$ and $\Gamma(H\pi\cap M_1)\wedge \Gamma(K^j_d)=\Gamma(H\pi\cap M_1)\times \Gamma(K^j_d)$; note further that $\Gamma(H\pi\cap M_1)\times \Gamma(M_2)$ is not necessarily connected, but its connected component containing the basepoint (after trimming, if necessary) coincides with $\Gamma(H\pi\cap M_1)\times \Gamma(K^j_d)$, both being the Stallings' graph of $(H\pi\cap M_1)\cap K^j_d=(H\pi\cap M_1)\cap M_2$; see %(the first three columns of) 
Fig.~\ref{pbs}, where every graph in the second and third columns is the pull-back of the one left to it and the one above it. Moreover, from the hypothesis we have $H\pi\cap M_1 \leqslant_\infty H\pi$ and so, $H\pi \cap M_2 \leqslant_\infty H\pi$.
\begin{figure}
\centering
 $$
\begin{array}{ccccccc}
& & \Gamma(F_n) & _{_d \geqslant} & \Gamma(K^j_d) & & \\ & & \Gamma(M_1) & _{_d \geqslant} & \Gamma(M_2) & _{\leqslant_{ff}} & \Gamma(M) \\ \Gamma(H\pi) & _{_{\infty} \geqslant} & \Gamma(H\pi\cap M_1) & _{_d \geqslant} & \Gamma(H\pi\cap M_2) & _{\leqslant_{ff}} & \Gamma(H\pi\cap M)
\end{array}
 $$
\caption{Diagram of pull-backs}
\label{pbs}
\end{figure}

Now, let us apply Proposition~\ref{p-expansion}, with $p=m$, to the pull-back of $\Gamma(H\pi)$ and $\Gamma(M_2)$ (which equals $\Gamma(H\pi\cap M_2)$): we are under the hypothesis ($\rk(H\pi)\geqslant 2$, the basepoint $\bp$ of $\Gamma(H\pi)$ has degree at least $3$, and $H\pi\cap M_2\leqslant_{\infty} H\pi$) so we can perform an $m$-expansion to $M_2$ and get $m$ freely independent elements $w_1, \ldots , w_m \in F_n$ such that $M_2\leqslant_{ff} M:=M_2*\langle w_1, \ldots , w_m \rangle$ and $H\pi \cap M_2 \leqslant_{ff} (H\pi \cap M_2)*\langle w_1, \ldots , w_m \rangle \leqslant_{ff} H\pi \cap M\leqslant_{\infty} H\pi$; in particular, $n_2=\trk(M)=\trk(M_2)+m$ and $\trk(H\pi \cap M)\geqslant \trk(H\pi \cap M_2)+m$. This is our candidate subgroup $M\leqslant_{fg} F_n$: in fact, (i) $H\pi\cap M \leqslant_{\infty} H\pi$; (ii) $H\pi \cap M_1 \not\leqslant [F_n,F_n]$ so, $H\pi \cap M_2 \not\leqslant [F_n,F_n]$ (since $F_n/[F_n,F_n]\simeq \Z^n$ is tor\-sion free) and $H\pi \cap M \not\leqslant [F_n,F_n]$; and (iii), by the Schreier index formula and equations~\eqref{index 2}-\eqref{star equ},
 $$
\frac{\trk(H\pi \cap M)}{\trk(M)}\geqslant \frac{\trk(H\pi \cap M_2)+m}{\trk(M_2)+m}= \frac{d\trk(H\pi \cap M_1)+m}{d\trk(M_1)+m} >\frac{\trk(H\pi \cap M_1 )}{\trk(M_1 )}-\frac{\epsilon}{2}> \di'_{F_n}(H\pi)-\epsilon.
 $$
It remains to choose appropriate vectors $a'_1, \ldots, a'_{n_2}\in \Z^m$ satisfying (iv). Take a free basis for $M_2$, $\{v_1, \ldots , v_k\}$, and extend it to a free basis for $M$, $\{v_1, \ldots , v_k, w_1, \ldots , w_m\}$. We have that $n_1=\rk(H\pi)$, $k=\rk(M_2)$, and $n_2=k+m=\rk(M)$. Similarly, as a free basis for $H\pi \cap M$, take a free basis for $H\pi \cap M_2$ (say, made of $q=\rk(H\pi\cap M_2)$ freely independent elements) followed by possibly some more, say $p\geqslant 0$, and finally followed by $\{w_1, \ldots , w_m\}$; we have $n_3 :=\rk(H\pi \cap M)=q+p+m\geqslant m$. Finally, consider the intersection diagram for $H\pi$ and $M$,
 \begin{equation}\label{int-big}
\begin{aligned}
\xy
%(0,5)*+{\rotatebox[origin=c]{270}{$\leqslant$}};
%(0,10)*{(H \cap K_\epsilon) \pi};
(0,0)*+{H\pi \cap M}; (-25,0)*+{H\pi}; (25,0)*+{M};
{\ar@{_(->}_-{\iota} (0,0)*++++++++++{}; (-25,0)*++++{}};
{\ar@{^(->}^-{\iota'} (0,0)*++++++++++{}; (25,0)*++++{}};
(0,-20)*+{\ZZ^{q+p+m}}; (-25,-20)*+{\ZZ^{n_1}}; (25,-20)*+{\ZZ^{k+m}};
{\ar@{->>}^-{\rho_3} (0,0)*+++{}; (0,-20)*+++{}};
{\ar@{->>}_-{\rho_1} (-25,0)*+++{}; (-25,-20)*+++{}};
{\ar@{->>}^-{\rho_2} (25,0)*+++{}; (25,-20)*+++{}};
(-12.5,-10)*+{///};
(12.5,-10)*+{///};
{\ar_-{{P}} (-3,-20)*+++++{}; (-25,-20)*++++{}};
{\ar^-{{P}'} (3,-20)*+++++{}; (25,-20)*++++{}};
(0,-40)*+{\ZZ^{m}};
{\ar^-{{A}} (-25,-20)*++++{}; (0,-40)*++++{}};
{\ar_-{{A'}} (25,-20)*++++{}; (0,-40)*++++{}};
{\ar@[]^-{{R}} (0,-20)*++++{}; (0,-40)*++++{}};
\endxy
\end{aligned}
 \end{equation}
where $P\in M_{(q+p+m)\times n_1}(\Z)$ and $P'\in M_{(q+p+m)\times (k+m)}(\Z)$ are the abelianization of the inclusions $H\pi\cap M\hookrightarrow H\pi$ and $H\pi\cap M\hookrightarrow M$, respectively, where $A\in M_{n_1\times m}(\Z)$ is the matrix with rows $a_1, \ldots ,a_{n_1}$, and where $A'\in M_{(k+m)\times m}(\Z)$ is the matrix with rows $a'_1, \ldots ,a'_{k+m}$ to be determined in such a way that $R=PA-P'A'\colon \Z^{q+p+m}\to \Z^m$ becomes onto.

Note that, by construction, the first $q$ elements in the free basis for $H\pi\cap M$ are freely independent from $\{w_1,\ldots ,w_m\}$, and that $\{w_1, \ldots ,w_m\}$ are present in the last positions of the chosen bases for both $H\pi\cap M$ and $M$; therefore, $P'$ has the form
 $$
P'=\left( \begin{array}{c|c} * & 0 \\ \hline * & * \\ \hline 0 & I_m \end{array} \right).
 $$
Let $Q$ be the lower $m\times m$ block in $PA\in M_{(q+p+m)\times m}(\Z)$, and define
 $$
A'=\left( \begin{array}{c} 0 \\ \hline -I_m+Q \end{array} \right) \in M_{(k+m)\times m}(\Z).
 $$
Separating the rows in the natural blocks, we have that
  $$
R=PA-P'A'=\left( \begin{array}{c} * \\ \hline * \\ \hline Q \end{array} \right) -\left( \begin{array}{c|c} * & 0 \\ \hline * & * \\ \hline 0 & I_m \end{array} \right) \left( \begin{array}{c} 0 \\ \hline -I_m+Q \end{array} \right) =\left( \begin{array}{c} * \\ \hline * \\ \hline Q \end{array} \right) -\left( \begin{array}{c} 0 \\ \hline * \\ \hline -I_m+Q \end{array} \right) =\left( \begin{array}{c} * \\ \hline * \\ \hline I_m \end{array} \right)
 $$
is a surjective map from $\Z^{q+p+m}$ onto $\Z^m$, proving condition (iv). This concludes the proof for this case-1.

\noindent \textit{\bf \boldmath Case-2: $\di'_{F_n}(H\pi)=1$}. Recall that we already fixed a basis $\{t^{a_1}u_1,\ldots, t^{a_{n_1}}u_{n_1}, t^{b_1}, \ldots ,t^{b_m} \}$ for $H$, with $u_1 \not\in [F_n, F_n]$. Let $M=\langle u_1, {u_2}^{-1}u_1u_2, \ldots ,{u_2}^{-(m-1)}u_1{u_2}^{m-1} \rangle\leqslant_{fg} H\pi$, a subgroup with $n_2=\rk(M)=m$ already satisfying the first three required conditions: (i) $H\pi\cap M=M\leqslant_{\infty} H\pi =\langle u_1, \ldots ,u_{n_1}\rangle$; (ii) $u_1 \in H\pi\cap M=M\not\leqslant [F_n,F_n]$; and (iii) $\rrk(H\pi\cap M)/\rrk(M)=1>\di'_{F_n}(H\pi)-\epsilon$, independently from the given $\epsilon$.

Finally, we have to choose appropriate vectors $a_1', \ldots, a_m' \in \Z^m$ so that (iv) holds. To do this, look at the intersection diagram for $H\pi$ and $M$ (see Fig.~\ref{m}): we have $n_3=n_2 =m$ and
 $$
P=\left( \begin{array}{cccc} 1 & 0 & \hdots & 0 \\ 1 & 0 & \hdots & 0 \\ \vdots & \vdots & & \vdots \\ 1 & 0 & \hdots & 0 \end{array}\right)\in M_{m\times n_1}(\Z), \quad \quad P'=I_m \in M_{m\times m}(\Z)
 $$
and, therefore,
 $$
R=PA-P'A'=\left(\begin{array}{c} a_1 \\ \vdots \\ a_1 \end{array} \right)-A'\in M_{m\times m}(\Z)
 $$
will become the identity $I_m$, and so onto, after choosing $A'$ appropriately. This shows (iv) and concludes the proof.
\end{proof}

\noindent\textbf{Acknowledgements.} The first author thanks the support and hospitality from the Barcelona Graduate School of Mathematics and the Departament de Matem\`atiques of the Universitat Polit\`ecnica de Catalunya. Both authors are partially supported by the Spanish Agencia Estatal de Investigaci\'on, through grant MTM2017-82740-P (AEI/ FEDER, UE), and  also by the ``Mar\'{\i}a de Maeztu'' Programme for Units of Excellence in R\&D (MDM-2014-0445).


\begin{thebibliography}{1}

\bibitem{artin} M. Artin, Algebra, 2nd edition, Addison Wesley, Aug. 2010.

\bibitem{BH} M. Bestvina and M. Handel, ``Train tracks and automorphisms of free groups", \emph{Ann. of Math.} \textbf{135} (1992), 1--51.

\bibitem{BK} R. Burns and S.M. Kam, ``On the intersection of double cosets in free groups, with an application to amalgamated products", \emph{J. of Algebra} \textbf{210}(1) (1998), 165--193.

\bibitem{DeV} J. Delgado and E. Ventura, ``Algorithmic problems for free-abelian times free groups", \emph{J. Algebra} \textbf{391} (2013), 256--283.

\bibitem{Dicks} W. Dicks, ``Simplified Mineyev'', preprint, 2 pages, available at http://mat.uab.cat/~dicks/SimplifiedMineyev.pdf.

\bibitem{DV} W. Dicks and E. Ventura, ``The group fixed by a family of injective endomorphisms of a free group", \emph{Contemporary Math.} \textbf{195} (1996), 81 pages.

\bibitem{DS} J. Dyer and P. Scott, ``Periodic automorphisms of free groups", \emph{Comm. Alg.} \textbf{3} (1975), 195--201.

\bibitem{Friedman} J. Friedman, ``Sheaves on graphs, their homological invariants, and a proof of the Hanna Neumann conjecture: with an appendix by Warren Dicks'', \emph{Mem. Amer. Math. Soc.} \textbf{233} (2015), xii+106 pp.

\bibitem{Ivanov} S. Ivanov, ``The intersection of subgroups in free groups and linear programming'', \emph{Math. Ann.} \textbf{370}(3-4) (2018), 1909--1940.

\bibitem{KM} I. Kapovich and A. Miasnikov, ``Stallings' foldings and subgroups of free groups", \emph{J. Algebra} \textbf{248}(2) (2002), 608--668.

\bibitem{MSW} S. Margolis, M. Sapir, and P. Weil, ``Closed subgroups in pro-$V$ topologies and the extension problems for inverse automata", \emph{Internat. J. Algebra Comput.} \textbf{11}(4) (2001), 405--445.

\bibitem{MV} A. Martino and E. Ventura, ``Fixed subgroups are compressed in free groups", \emph{Comm. Algebra} \textbf{32}(10) (2004), 3921--3935.

\bibitem{MVW} A. Miasnikov, E. Ventura, and P. Weil, ``Algebraic extensions in free groups", \emph{Algebra and Geometry in Geneva and Barcelona, Trends in Mathematics}, Birkha\"{u}ser (2007), 225--253.

\bibitem{Mineyev} I. Mineyev, ``Submultiplicativity and the Hanna Neumann conjecture'', \emph{Ann. of Math.} \textbf{175} (2012), 393--414.

\bibitem{HN} H. Neumann, ``On intersections of finitely generated subgroups of free groups", \emph{Publ. Math., Debrecen} \textbf{5} (1956), 186--189.

\bibitem{WN} W. Neumann, ``On the intersection of finitely generated free groups", \emph{Lecture Notes in Math.} \textbf{1456}, 161--170, Springer-Verlag, 1990.

\bibitem{T} M. Takahasi, ``Note on chain conditions in free groups", \emph{Osaka Math. Journal} \textbf{3}(2) (1951), 221--225.

\bibitem{V} E. Ventura, ``On fixed subgroups of maximal rank", \emph{Comm. Algebra} \textbf{25} (1997), 3361--3375.

\bibitem{WVZ} J. Wu, E. Ventura, and Q. Zhang, ``Fixed subgroups in direct products of surface groups of Euclidean type", to appear in \emph{Comm. Algebra}; available at https://arxiv.org/abs/1812.03204.

\bibitem{WZ} J. Wu and Q. Zhang, ``The group fixed by a family of endomorphisms of a surface group", \emph{J. Algebra} \textbf{417} (2014), 412--432.

\bibitem{ZVW} Q. Zhang, E. Ventura, and J. Wu, ``Fixed subgroups are compressed in surface groups", \emph{Int. J. Algebra and Comput.} \textbf{25}(5) (2015), 865--887.
\end{thebibliography}
\end{document}